    \definecolor{MyRed}{rgb}{0.9,0,0}
    \definecolor{MyGreen}{rgb}{0,0.9,0}
    \definecolor{MyBlue}{rgb}{0,0,0.9}
\theoremstyle{plain}
\newtheorem{theorem}{Theorem}
\newtheorem*{theorem*}{Theorem}
\newtheorem*{Talagrand}{Talagrand's Inequality}
\newtheorem{lemma}[theorem]{Lemma}
\newtheorem{corollary}[theorem]{Corollary}
\theoremstyle{definition}
\newtheorem*{eg}{Example}
\def\Bollobas{Bollob{\'a}s}
\def\Luczak{{\L}uczak}
\def\Markstrom{Markstr\"om}
\def\Rucinski{Ruci{\'n}ski}
\def\Umea{Ume\aa}
\def\Malostranske{Malostransk\'{e}}
\def\Namesti{N\'am\v{e}st\'{i}}
\newcommand{\whp}{{\bf whp}}
\renewcommand{\leq}{\leqslant}
\renewcommand{\geq}{\geqslant}
\def\vs{\vspace{10pt}}
\newcommand{\E}{\bb E}
\newcommand{\bb}[1]{{\text{$\mathbb{#1}$}}}    
\newcommand{\bi}[1]{{\bf \em #1}}                
\def\Set#1{\left\{ #1 \right\}}
\def\Norm#1{\left\| #1 \right\|}
\newcommand{\sectionref}[1]{\hyperref[#1]{Section~\ref*{#1}}}
\newcommand{\subsectionref}[1]{\hyperref[#1]{Subsection~\ref*{#1}}}
\newcommand{\lemmaref}[1]{\hyperref[#1]{Lemma~\ref*{#1}}}
\newcommand{\corref}[1]{\hyperref[#1]{Corollary~\ref*{#1}}}
\begin{document}

\title{Random Latin square graphs}
\author{Demetres Christofides and Klas \Markstrom\/}
\date{\today}
\subjclass{05C80,05C25,05B15}
\keywords{Random graphs; Cayley graphs; Latin squares}
\begin{abstract}
In this paper we introduce new models of random graphs, arising from
Latin squares which include random Cayley graphs as a special case.
We investigate some properties of these graphs including their
clique, independence and chromatic numbers, their expansion
properties as well as their connectivity and Hamiltonicity. The
results obtained are compared with other models of random graphs and
several similarities and differences are pointed out. For many
properties our results for the general case are as strong as the
known results for random Cayley graphs and sometimes improve the
previously best results for the Cayley case.
\end{abstract}
\maketitle

\section{Introduction}

The concept of random graphs is a very important notion in combinatorics. Although there are several models of random graphs, by a random graph one usually refers to the model $\mathscr{G}(n,p)$, the probability space of all graphs on $[n]$ in which every edge appears independently with probability $p$. For standard results on random graphs we refer the reader to the textbooks of \Bollobas\/~\cite{Bollobas01} and Janson, \Luczak\ and \Rucinski\/~\cite{Janson&Luczak&Rucinski00}.\vs

In this paper, we introduce new models of random graphs and study some of their properties with particular interest in their relation to the model $\mathscr{G}(n,p)$. Our models arise from Latin squares. Given a group, one can obtain Latin squares by considering its multiplication table or its division table. It turns out that the random graph obtained by the division table of a group $G$, is exactly the random Cayley graph of $G$ (with respect to a random subset $S$ of $G$.)\vs

Before defining our models, let us recall that a \bi{Latin square} of order $n$ is an $n \times n$ matrix $L$ with entries from a set of $n$ elements, such that in each row and in each column, every element appears exactly once. Given a Latin square $L$ with entries in a set $A$ of size $n$, and a subset $S$ of $A$, we define the \bi{Latin square graph} $G(L,S)$ on vertex set $[n]$, by joining $i$ to $j$ if and only if either $L_{ij} \in S$, or $L_{ji} \in S$.\vs

Suppose we are given a sequence $(L_n)$ of Latin squares of order $n$, with entries in $[n]$, say. Choosing $S \subseteq [n]$ by picking its elements independently at random with probability $p$, we obtain a random Latin square graph $G(L_n,S)$. We denote this model of random Latin square graphs by $\mathscr{G}(L_n,p)$. A related model is obtained by choosing a multiset $S$ of $k$ elements of $[n]$ by picking its elements independently and unifomly at random (with replacement). We denote this model by $\mathscr{G}(L_n,k)$. Note that our underlying graphs are simple. However, for the model $\mathscr{G}(L_n,k)$ it will be convenient for some of our results to retain multiple edges and loops. When we do this, we will denote this new model by $\mathscr{G}_m(L_n,k)$. To be more explicit, in this model the number of edges joining $i$ to $j$ is exactly the total number of times that $L_{ij}$ and $L_{ji}$ appear in $S$. In particular, every $G \in \mathscr{G}_m(L_n,k)$ is a $2k$-regular multigraph.\vs

A similar model is obtained by looking at the complement of the graph $G \in \mathscr{G}(L_n,p)$. We denote this model by $\bar{\mathscr{G}}(L_n,p)$. In general, this model is not the same as $\mathscr{G}(L_n,1-p)$, the reason being that $L_{ij}$ is not necessarily equal to $L_{ji}$. However, usually it is not too difficult to translate results from one model into the other, so we will only concentrate on $G \in \mathscr{G}(L_n,p)$.\vs

Note that, as mentioned above, our models include random Cayley
graphs as a special case. Indeed, given a group $G$, consider the
Latin square $L$ defined by $L_{xy} = xy^{-1}$. Then, given any
subset $S$ of elements of $G$, the Latin square graph $G(L,S)$ is
exactly the Cayley graph of $G$ with respect to $S$. The
multiplication table of a group is also a Latin square, giving rise
to what is usually known (motivated by the abelian case) as a
\bi{Cayley sum graph}. So this model includes random Cayley sum
graphs as well.\vs

We should mention here that there are several differences between random Cayley graphs and our more general models of random Latin square graphs. For example, random Cayley graphs are always vertex transitive. On the other hand a random Latin square graph, even if it arises from the multiplication table of a (non-abelian) group, might not even be regular. However, it is easy to see that random Latin square graphs are not far from being regular in the sense that the ratio of maximum to minimum degree is bounded above by 2.\vs

The fact that random Latin square graphs are almost regular (in the above sense) motivates also the comparison of our models with $\mathscr{G}_{n,r}$, the probability space of all $r$-regular graphs on $n$ vertices taken with the uniform measure. (As usual, it is always assumed that $rn$ is even.)\vs

Sometimes, it is easier to work with random Cayley graphs or random Cayley sum graphs for abelian groups, rather than random Latin square graphs. This is because we always have $L_{ij} = L_{ij}^{-1}$ in the case of Cayley graphs, and $L_{ij} = L_{ji}$ in the case of Cayley sum graphs, and so dependences between the edges can be easier to deal with. This sometimes leads to sharper results for the first two families of random graphs than for general random Latin square graphs; however we have opted to state our results only in the general case of random Latin square graphs.\vs

It seems that the general class of random graphs arising from Latin squares have not been studied before. However there has been much interest in random Cayley graphs and random Cayley sum graphs. For example, Agarwal, Alon, Aronov and Suri~\cite{Agarwal&Alon&Aronov&Suri94} established an upper bound on the clique number of random Cayley graphs arising from cyclic groups and used it to construct visibility graphs of line segments in the plane which need many cliques and complete bipartite graphs to represent them. In their study of a communication problem, Alon and Orlitsky~\cite{Alon&Orlitsky95} proved a similar upper bound for random Cayley graphs arising from abelian groups of odd order. Green~\cite{Green05}, using number theoretic tools, studied the clique number of various Cayley sum graphs and showed that some of them are good examples of Ramsey graphs while others are not. The diameter of random Cayley graphs with logarithmic degree was studied by Alon, Barak and Manber in~\cite{Alon&Barak&Manber87}. Alon and Roichman~\cite{Alon&Roichman94} proved that random Cayley graphs (on sufficiently many generators) are almost surely expanders, a result which was later improved by
several authors~\cite{Landau&Russell04,Loh&Schulman04,Christofides&Markstrom08}. The fact that random Cayley graphs are expanders has several consequences for the diameter, connectivity and Hamiltonicity of such graphs. Finally, some other aspects of the diameter, connectivity and Hamiltonicity of random Cayley graphs and random Cayley digraphs were studied in~\cite{Meng&Liu97,Meng&Huang98,Meng97,Meng&Huang96}.\vs

In this paper we extend many of these resutls to the general case of random Latin square graphs and show that the structure of the Latin squares have a non-trivial influence on many properties of random Latin square graphs. In \sectionref{results} we state and discuss our main results regarding random Latin square graphs. We prove these results in \sectionref{cliques}, \sectionref{colouring} and \sectionref{S:Expansion}. In \sectionref{Final} we give further examples and open problems.

\section{Statements and discussion of the results}\label{results}

In this section, we list our main results and make a few comments about them, comparing them with the corresponding results in the $\mathscr{G}(n,p)$ and $\mathscr{G}_{n,r}$ models. In \subsectionref{SS:2.1}, we will be interested in the maximum size of cliques and independent sets in $\mathscr{G}_{n,p}$, as well as the chromatic number of $\mathscr{G}_{n,p}$ and its complement. In \subsectionref{SS:2.2}, we will be interested in the expansion properties of random Latin square graphs as well as several consequences of these properties regarding connectivity and Hamiltonicity. For the results of this subsection it will be easier to work in the the models $\mathscr{G}_m(L_n,k)$ and $\mathscr{G}(L_n,k)$.\vs

\subsection{Cliques, independent sets and colouring}\label{SS:2.1}

We begin with an upper bound on the clique number of random Latin square graphs. It is well known that the clique number of $\mathscr{G}(n,1/2)$, is \whp\ asymptotic to $2\log_{2}{n}$. For the case of dense random regular graphs, it was proved in~\cite{Krivelevich&Sudakov&Vu&Wormald01} that the clique number of $\mathscr{G}_{n,n/2}$ is \whp\ asymptotic to $2\log_{2}{n}$.\vs

Guided by the above results, one might hope to prove that the clique number of $\mathscr{G}(L_n,1/2)$ is \whp\ $\Theta(\log{n})$. However, it turns out that this is not the case. Green~\cite{Green05} proved that the clique number of the random Cayley sum graph on $\bb Z_2^m$, with $p = 1/2$, is \whp\ $\Theta(\log{n} \log{\log{n}})$, where $n = 2^m = |\bb Z_2^m|$. In the same paper, Green proved that the clique number of the random Cayley sum graph on $\bb Z_n$, with $p = 1/2$, is \whp\ $\Theta(\log{n})$. This shows that, in general, results about the model $\mathscr{G}(L_n,p)$ can depend on the actual sequence of Latin squares chosen.\vs

To the best of our knowledge, the best known general result on the clique number is due to Alon and Orlitsky~\cite{Alon&Orlitsky95}, which says that the clique number of a random Cayley graph arising from an abelian group of odd order $n$ is \whp\ $O((\log{n})^2)$. Using similar methods, we have managed to show that the same bound is in fact true for random Latin square graphs. In particular, it is also true for random Cayley graphs arising from non-abelian groups. We believe but cannot prove that the $2$ in the exponent can be reduced further

\begin{theorem}[Clique number; upper bound]\label{clique-upper}
Let $0 < p < 1$ be a fixed constant and let $d = 1/(2p-p^2)$. Then, for almost every $G \in \mathscr{G}(L_n,p)$, we have
\[ \omega(G) \leq  27 \left(\log_d{n} \right)^2.\]
\end{theorem}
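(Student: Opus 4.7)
The plan is a first-moment argument. Setting $k = \lceil 27(\log_d n)^2 \rceil$, I would show $\binom{n}{k} \cdot \max_{|K|=k} \Pr[K \text{ is a clique}] = o(1)$, broadly following the strategy Alon and Orlitsky~\cite{Alon&Orlitsky95} used for random Cayley graphs of abelian groups of odd order. Fix a candidate $K = \{v_1, \ldots, v_k\} \subseteq [n]$. The key combinatorial input is the Latin property: for each $v \in K$, the labels $\{L_{vw} : w \in K \setminus \{v\}\}$ are $k-1$ \emph{distinct} elements of $[n]$, and likewise for the column of $v$. Hence if I write $R_v := \{w \in K \setminus \{v\} : L_{vw} \in S\}$ and $C_v := \{w \in K \setminus \{v\} : L_{wv} \in S\}$, then $|R_v|$ and $|C_v|$ are each $\mathrm{Bin}(k-1,p)$-distributed, and the clique event is precisely that the family $(R_v \cup C_v)_{v \in K}$ edge-covers the complete graph on $K$.

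Next I would reveal the membership bits of $S$ in $k$ phases, one per vertex of $K$, exploiting the Latin property to extract independence: the labels appearing in a single row or column, restricted to $K$, are pairwise distinct, so within a phase the new information is a fresh batch of independent $\mathrm{Bernoulli}(p)$ bits. In the ``generic'' case where the labels populating the $k \times k$ sub-Latin-square $L_K$ are mostly distinct, these phases are also nearly independent of one another, and a Chernoff-type bound on the covering condition above yields $\Pr[K\text{ clique}] \leq d^{-\Omega(k^2)}$---already strong enough to give $\omega(G) = O(\log_d n)$ in this case. The real difficulty lies in the adversarial case of heavy label coincidence in $L_K$: a single element $\ell \in S$ may then simultaneously certify many clique edges, as happens for instance when $L$ is the addition table of $\mathbb{Z}_n$ and $K$ is an arithmetic progression. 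To control this, I would classify the candidate sets $K$ by the ``collision type'' of $L_K$ (essentially the multiset of multiplicities with which labels appear in the $k \times k$ sub-matrix), bound for each type both the number of realisations and the conditional clique probability, and optimise the resulting trade-off over all types.

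\textbf{Main obstacle.} The principal technical difficulty is exactly this label-coincidence phenomenon: when $L$ is arbitrary there is no a priori control on the collision structure of $L_K$, and in the worst case the clique probability can be as large as $p^{\Theta(k)}$ rather than the ``ideal'' $d^{-\binom{k}{2}}$ which would match $\mathscr{G}(n, 1/d)$. The extra factor of $\log_d n$ in the exponent compared to $\mathscr{G}(n, 1/d)$ is precisely the price of managing this loss of independence, and any successful proof must have a clean bookkeeping step that tracks how much each collision pattern contributes to both the enumeration and the probability.
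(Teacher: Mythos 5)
Your plan correctly isolates the central difficulty (label coincidences in the $k\times k$ sub-array $L_K$), but the step you propose to overcome it --- classifying the $k$-sets by the collision type of $L_K$, counting the realisations of each type, and optimising the trade-off --- is precisely the step that does not go through for a general Latin square, and you leave it entirely unexecuted. For the worst types (say $|K'| = O(k)$, where $K' = \{L_{ij} : i,j \in K,\ i \neq j\}$) the clique probability is only $p^{O(k)}$, so to beat the $\binom{n}{k} = n^{(1+o(1))k}$ enumeration you would need to show that there are far fewer than $n^{k}$ heavily-colliding $k$-sets. For specific groups such counting is possible but deep (it is the subject of Green's work on sets with small sumset, via Freiman-type structure theorems); for an arbitrary Latin square there is no algebraic structure to exploit and no such bound is available. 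The paper flags this explicitly at the start of \sectionref{cliques}: unless one can bound the number of sets with small label set, the direct first-moment computation over $k$-sets cannot give any good bounds.

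The idea you are missing is a sub-selection lemma that sidesteps the counting problem altogether. \lemmaref{MainLemma} shows that \emph{every} set $A$ of size $a$ contains a subset $B$ of size $b$ with
\[ |B'| \geq b(b-1)\left(1 - \frac{b-2}{a-2} - \frac{(b-2)(b-3)}{2(a-3)}\right) - n_2(B), \]
proved by choosing $B$ uniformly at random among the $b$-subsets of $A$ and using the Latin property to bound the expected number of coincidences: for fixed distinct $i,j$ there is exactly one $k$ with $L_{ij}=L_{jk}$, and for fixed distinct $i,j,k$ exactly one $l$ with $L_{ij}=L_{kl}$, whence $n_3(A) \leq a(a-1)$ and $n_4(A) \leq a(a-1)(a-2)/2$. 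Taking $b = 3\log_d n$ and $a = 3b^2 = 27(\log_d n)^2$, any clique of size $a$ would contain a clique $B$ of size $b$ with $|B'| \geq \frac{5}{6}b^2 - n_2(B) + O(b)$, hence with $\Omega(b^2)$ edge events governed by distinct, independent membership bits; the union bound is then taken only over the $\binom{n}{b}$ sets of size $b$, giving $\binom{n}{b}(2p-p^2)^{b^2/3+O(b)} = o(1)$. This is also where the $(\log_d n)^2$ and the constant $27$ come from: the good subset extracted has size only of order the square root of $|A|$. Your generic-case analysis matches the second half of this argument, but without the extraction lemma the adversarial case --- which is the whole point of the theorem --- remains open.
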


Since the model $\bar{\mathscr{G}}(L_n,p)$ is different from $\mathscr{G}(L_n,1-p)$, we cannot immediately deduce a corresponding upper bound for the independence number. One way to find such a bound is to couple the model $\bar{\mathscr{G}}(L_n,p)$ with $\mathscr{G}(L_n,1-p)$, and use \autoref{clique-upper} to deduce that for almost every $G \in \mathscr{G}(L_n,p)$,
\[ \alpha(G) = \omega(\bar{G}) \leq  27 \left(\log_{1/(1-p^2)}{n} \right)^2. \]

In fact, using an argument similar to the one used in the proof of \autoref{clique-upper}, we can obtain a slightly better result.

\begin{theorem}[Independence number; upper bound]\label{independence-upper}
Let $0 < p < 1$ be a fixed constant and let $d = 1/(1-p)$. Then, for almost every $G \in \mathscr{G}(L_n,p)$, we have

\[ \alpha(G) \leq 27 \left(\log_d{n} \right)^2.\]
\end{theorem}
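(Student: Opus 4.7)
The plan is to parallel the proof of \autoref{clique-upper}, switching the role of ``at least one of $L_{ij}, L_{ji}$ lies in $S$'' to ``both lie outside $S$''. For a subset $I\subseteq[n]$ with $|I|=k$, set
\[T(I) := \{L_{ij} : i,j\in I,\ i\neq j\} \subseteq [n],\]
the set of off-diagonal entries of the submatrix $L|_{I\times I}$. Then $I$ is independent in $G(L_n,S)$ precisely when $T(I)\cap S=\emptyset$, which occurs with probability $(1-p)^{|T(I)|} = d^{-|T(I)|}$.

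The Latin-square axiom forces $|T(I)| \geq k-1$: each symbol of $[n]$ occurs in at most one cell of each row (and each column) of $L$, so at most $k$ times in the submatrix $L|_{I\times I}$, and hence its $k(k-1)$ off-diagonal entries realize at least $k-1$ distinct values. Equality is attainable (for example for cosets of order-$k$ subgroups in Cayley tables), so the naive union bound $\binom{n}{k}d^{-(k-1)}$ is far too weak.

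To reach $k = 27(\log_d n)^2$, I would use a two-scale argument analogous to the one I expect is used for \autoref{clique-upper}. Fix an intermediate scale $\ell = \lceil\alpha \log_d n\rceil$ for a suitable constant $\alpha$. A direct first moment calculation shows that \whp\ no $\ell$-subset $J$ with $|T(J)|\geq C\ell\log_d n$ is independent, since the expected number of such $J$ is at most $\binom{n}{\ell}d^{-C\ell \log_d n} \leq n^{(1-C)\ell}$, which tends to $0$ for any $C>1$. It then suffices to show, combinatorially, that every $k$-subset $I$ contains some $\ell$-subset $J$ with $|T(J)|\geq C\ell \log_d n$, whence no $k$-subset can be independent.

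The main obstacle is this last structural step. It cannot hold pointwise: if $L$ is the Cayley table of $\bb Z_n$ and $I$ is a subgroup coset of order $k$, then every $J\subseteq I$ has $|T(J)|=|J|-1$, far below the threshold $C\ell\log_d n$. The plan is therefore to handle such highly compressed $I$ separately: the condition $|T(I)|\approx k-1$ forces every value in $T(I)$ to appear in a derangement pattern on $I$, rigid enough that the number of $k$-subsets with this property is only polynomial in $n$; the bound $n^{O(1)}d^{-(k-1)}\to 0$ then absorbs such sets, while the generic sets are ruled out by the first-moment step above.
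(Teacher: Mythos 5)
Your overall skeleton---a first-moment bound applied not to the $k$-sets themselves but to intermediate $\ell$-subsets $J$ with $|T(J)|=\Omega(\ell\log_d n)$---is exactly the paper's strategy, and your first-moment computation for such $J$ is correct. The genuine gap is in the extraction step, where you talk yourself out of the statement you actually need. Your ``counterexample'' is false: if $I$ is a coset of a subgroup $H$ of order $k=27(\log_d n)^2$ in $\mathbb{Z}_n$, a subset $J\subseteq I$ of size $\ell$ has $T(J)$ equal to a restricted sumset inside a coset of $H$, which for generic $J$ has size about $\min\{k,\binom{\ell}{2}\}$, not $|J|-1$; only progression-like subsets are that compressed, and in any case $|T(J)|$ can be as large as $k$, which already exceeds your threshold $C\ell\log_d n$ for suitable constants. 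Having misdiagnosed the obstacle, you fall back on ``counting the compressed $I$'', which is both unsubstantiated (it is essentially the Freiman-type problem of counting sets with small sumset, the subject of Green's paper~\cite{Green05}, and hard even for $\mathbb{Z}_n$, let alone an arbitrary Latin square) and insufficient, since your dichotomy says nothing about sets $I$ with $|T(I)|$ of intermediate size, say $k\log k$.

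The paper proves the extraction step you need, uniformly over all Latin squares, by averaging (\lemmaref{MainLemma}): choose $J$ uniformly at random among the $b$-subsets of $I$, with $b=3\log_d n$ and $|I|=a=3b^2$, and bound the expected number of coincidences among the $b(b-1)$ ordered pairs. Writing $n_2,n_3,n_4$ for the numbers of coincidences of the forms $L_{ij}=L_{ji}$, $L_{ij}=L_{jk}$ and $L_{ij}=L_{kl}$, the Latin square property gives the deterministic bounds $n_3(I)\leq a(a-1)$ and $n_4(I)\leq a(a-1)(a-2)/2$ (for fixed $i,j$ there is a unique $k$ with $L_{jk}=L_{ij}$, etc.), while $n_2(J)\leq\binom{b}{2}$ trivially; a short expectation calculation then produces some $J$ with $|T(J)|\geq \tfrac13 b^2+O(b)$. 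That is the missing ingredient; with it, your union bound over $b$-subsets closes the proof and gives $\alpha(G)\leq 3b^2=27(\log_d n)^2$ \whp.
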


Recall that the \bi{(vertex) clique cover number} $\theta(G)$ of a graph $G$ is the smallest integer $k$ such that the vertex set of $G$ can be partitioned into $k$ cliques. I.e. $\theta(G) = \chi(\bar G)$. So an immediate corollary of \autoref{clique-upper} is:

\begin{corollary}[Clique cover number; lower bound]\label{cc-lower}
Let $0 < p < 1$ be a fixed constant and let $d = 1/(2p-p^2)$. Then, for almost every $G \in \mathscr{G}(L_n,p)$, we have

\[ \theta(G) \geq \frac{n}{27 \left(\log_d{n} \right)^2}. \qed\]
\end{corollary}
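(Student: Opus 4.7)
The plan is to observe that this follows in a single line from \autoref{clique-upper}, so the only real content is to make the pigeonhole step explicit. Suppose $V(G) = C_1 \cup \cdots \cup C_\theta$ is a partition of $V(G)$ into $\theta = \theta(G)$ cliques. Each $C_i$ is in particular a clique of $G$, so $|C_i| \leq \omega(G)$ for every $i$. Summing over $i$ gives
\[ n = \sum_{i=1}^{\theta} |C_i| \leq \theta(G) \cdot \omega(G), \]
so $\theta(G) \geq n/\omega(G)$ for every graph $G$ on $n$ vertices.

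Now I would simply apply \autoref{clique-upper}: with $d = 1/(2p-p^2)$, for almost every $G \in \mathscr{G}(L_n,p)$ we have $\omega(G) \leq 27 (\log_d n)^2$, and substituting this into the inequality above yields
\[ \theta(G) \geq \frac{n}{\omega(G)} \geq \frac{n}{27 (\log_d n)^2} \]
on the same almost-sure event. There is no obstacle here; the whole point of isolating this as a corollary is that the deterministic bound $\theta(G) \omega(G) \geq n$ converts the upper bound on $\omega$ directly into a matching lower bound on $\theta$.
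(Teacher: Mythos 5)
Your proof is correct and is exactly the argument the paper has in mind: the paper marks this corollary as immediate from \autoref{clique-upper}, the only content being the deterministic inequality $n \leq \theta(G)\,\omega(G)$, which you spell out via the clique partition. Nothing further is needed.
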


Similarly, \autoref{independence-upper} implies:

\begin{corollary}[Chromatic number; lower bound]\label{chromatic-lower}
Let $0 < p < 1$ be a fixed constant and let $d = 1/(1-p)$. Then, for almost every $G \in \mathscr{G}(L_n,p)$, we have
\[ \chi(G) \geq \frac{n}{27 \left(\log_d{n} \right)^2}. \qed\]
\end{corollary}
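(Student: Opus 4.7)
The plan is to apply the elementary inequality $\chi(G)\cdot\alpha(G)\geq n$, valid for every graph on $n$ vertices: any proper colouring of $G$ partitions $V(G)$ into $\chi(G)$ independent sets, each of cardinality at most $\alpha(G)$, so their total cardinality $n$ is at most $\chi(G)\cdot\alpha(G)$. Rearranging gives $\chi(G)\geq n/\alpha(G)$.

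Combining this with \autoref{independence-upper}, which asserts that for almost every $G\in\mathscr{G}(L_n,p)$ the independence number satisfies $\alpha(G)\leq 27(\log_d n)^2$ with $d=1/(1-p)$, we immediately deduce
\[
\chi(G)\geq\frac{n}{\alpha(G)}\geq\frac{n}{27(\log_d n)^2}
\]
with high probability. Since \autoref{independence-upper} has already been quantified over almost every $G$, no additional probabilistic argument is needed: the event on which the bound on $\alpha(G)$ holds is precisely the event on which the bound on $\chi(G)$ holds. There is no real obstacle; all of the work (the analysis of the Latin-square structure and the concentration argument controlling $\alpha$) is contained in \autoref{independence-upper}, and the present corollary is a one-line consequence of the standard $\chi\alpha\geq n$ inequality. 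This is exactly why the authors marked the statement with \qed in its declaration.
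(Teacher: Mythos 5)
Your proof is correct and is exactly the argument the paper intends: the corollary is stated as an immediate consequence of \autoref{independence-upper} via the standard inequality $\chi(G) \geq n/\alpha(G)$, which is why the paper gives no separate proof.
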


We now move to our upper bound on the chromatic number of random Latin square graphs. Recall that for constant $p$, the chromatic number of $\mathscr{G}(n,p)$ is \whp\ asymptotic to $\frac{n}{2 \log_{b}{n}}$, where $b = 1/(1-p)$. A similar behaviour was proved in~\cite{Krivelevich&Sudakov&Vu&Wormald01} for the case of random regular graphs of high degree. More specifically, it was proved that for any $\varepsilon > 0$, if $\varepsilon n \leq r \leq 0.9n$, then the chromatic number of $\mathscr{G}_{n,r}$ is \whp\ asymptotic to $\frac{n}{2 \log_{b}{n}}$, where $b = n/(n-r)$.\vs

For the case of random Latin square graphs, we prove an upper bound
of the same order of magnitude. However, since our lower bound is
only of order $\frac{n}{\left( \log_{b}{n} \right)^2}$, we still do
not have a sharp asymptotic result for the chromatic number. In
fact, as in the case of the clique and independence numbers, we know
that the chromatic number can depend on the sequence of Latin
squares chosen. For example, the result of Green~\cite{Green05}
mentioned above, that the independence number of the random Cayley
sum graph on $\bb Z_n$ (with $p = 1/2$) is \whp\ $\Theta(\log{n})$,
provides a lower bound for the chromatic number of these graphs
which is of the same order of magnitude as our corresponding upper
bound. On the other hand, we claim that the chromatic number of the
random Cayley sum graph on $\bb Z_2^m$ is \whp\
$\Theta(\frac{n}{\log{n} \log{\log{n}}})$, where $n = 2^m = |\bb
Z_2^m|$. The lower bound follows immediately from the result of
Green~\cite{Green05} mentioned above for the independence number of
these graphs. The upper bound does not follow directly from that
result, however it follows from its proof in~\cite{Green05} that in
fact there is \whp\ a $\left\lfloor \log{m} + \log\log{m} - 1
\right\rfloor$-dimensional subspace of $\bb Z_2^m$ which is an
independent set. Indeed, given this result, it follows that \whp, a
random Cayley sum graph on $\bb Z_2^m$ can be partitioned into at
most $\frac{4n}{\log{n}\log{\log{n}}}$ independent sets of this
form.\vs

In fact, our upper bound on the chromatic number will be an immediate consequence of an upper bound on the list-chromatic number. Recall that the \bi{list-chromatic number} $\chi_l(G)$ of a graph $G$ is the smallest positive integer $k$ such that for any assignment of $k$-element sets $L(v)$ to the vertices of $G$, there is a proper vertex colouring $c$ of $G$ with $c(v) \in L(v)$ for every vertex $v$ of $G$.

\begin{theorem}[List-chromatic number; upper bound]\label{list-chromatic-upper}
Let $0 < p < 1$ be a fixed constant and let $d = 1/(1-p)$. Then, for almost every $G \in \mathscr{G}(L_n,p)$, we have
\[ \chi_l(G) \leq \frac{n}{\frac{1}{4}\log_d{n} - \frac{1}{2}\log_d{\log_d{n}} - 2}. \]
\end{theorem}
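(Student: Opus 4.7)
The overall strategy has two stages: first, establish a structural lemma giving a lower bound on the independence number of every sufficiently large induced subgraph of $G$; second, use this lemma in an iterative list-colouring procedure to extract monochromatic independent sets one at a time.

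\textbf{Stage 1 (structural lemma).}  Setting $d = 1/(1-p)$ and $s(m) := \tfrac{1}{2}\log_d m - O(\log_d\log_d m)$, I would show that with high probability, every subset $U \subseteq V(G)$ with $|U| = m$ above a suitable polylogarithmic threshold satisfies $\alpha(G[U]) \geq s(m)$.  For a fixed candidate $s$-subset $I \subseteq U$, $I$ is independent in $G$ exactly when none of the at most $s(s-1)$ distinct Latin-square entries $\{L_{ij}, L_{ji} : i \neq j \in I\}$ lies in the random set $S$, so $\Pr[I \text{ indep}] \geq (1-p)^{s(s-1)} = d^{-s(s-1)}$.  Summing over $I$, the expected number of independent $s$-sets in $G[U]$ is at least $\binom{m}{s} d^{-s(s-1)}$, which is super-polynomial in $m$ when $s \leq s(m)$.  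A second-moment calculation---treating carefully the overlap between the Latin-symbol sets $T(I), T(J)$ for two candidate sets, in the same spirit as in the proofs of \autoref{clique-upper} and \autoref{independence-upper}---yields $\Pr[\alpha(G[U]) < s(m)] \leq \exp(-m^{\varepsilon})$ for some $\varepsilon > 0$, and a union bound over the $\leq 2^n$ choices of $U$ completes the lemma.

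\textbf{Stage 2 (list-colouring).}  Let $t = \tfrac{1}{4}\log_d n - \tfrac{1}{2}\log_d\log_d n - 2$ and $k = \lceil n/t \rceil$.  These constants are tuned so that the structural lemma, applied to subgraphs on $m_1 \approx n^{1/2}$ vertices, guarantees the existence of independent sets of size at least $t$.  Given any list assignment $L$ with $|L(v)| \geq k$, I would colour greedily: at each step, identify a colour $c$ whose restricted uncoloured class $W_c := \{v \in W : c \in L(v)\}$ has size at least $m_1$, apply the structural lemma to extract an independent set of size $t$ in $G[W_c]$, and assign it the colour $c$.  Each iteration uses a fresh colour and colours $t$ new vertices, so at most $\lceil n/t \rceil = k$ colours are used in total.

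\textbf{Main obstacle.}  The delicate step is guaranteeing, at each iteration, the existence of a colour $c$ whose restricted class $W_c$ is large enough to apply the structural lemma.  A direct pigeonhole on $\sum_c |W_c| = k|W|$ only produces a sufficiently popular colour when the colour universe is controlled, which is not automatic---in the worst case, each list can use entirely fresh colours.  I would sidestep this either by (a) making a probabilistic choice of colour at each step and applying a Talagrand-type concentration to argue that every vertex is covered whp, or (b) using an extremal argument to exploit the large list sizes, restricting attention to colours appearing in many lists and handling the rest by a small separate case.  The fine-tuning of thresholds in this step is what ultimately fixes the precise constants $\tfrac{1}{4}$ and $\tfrac{1}{2}$ appearing in the bound.
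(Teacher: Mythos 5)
There is a genuine gap here, and it is precisely the one the paper flags at the start of \sectionref{colouring} before giving its own argument. Your Stage 1 cannot be carried out as described. The model has only $n$ independent random bits (whether each element of $[n]$ lies in $S$), so every non-empty event has probability at least $\min(p,1-p)^n$; worse, for particular Latin squares the failure probability for a single $U$ is enormous. For instance, for the addition table of $\mathbb{Z}_n$ and $U$ an arithmetic progression of length $m$, the set $U'=\{L_{ij}:i,j\in U\}$ has size $O(m)$, so $\Pr\bigl(G[U]\text{ is complete}\bigr)\geq p^{O(m)}$, while a union bound over the $\binom{n}{m}=\exp\bigl(\Theta(m\log(n/m))\bigr)$ sets of size $m$ would require failure probability $\exp(-\omega(m\log n))$ --- let alone over all $2^n$ subsets as you propose. (Separately, a second-moment/Chebyshev argument yields only a polynomially small failure probability, never $\exp(-m^{\varepsilon})$; and even $\exp(-m^{\varepsilon})$ with $m\approx n^{1/2}$ is nowhere near $o(2^{-n})$.) So the structural lemma ``every large induced subgraph contains an independent set of size $\sim\frac{1}{2}\log_d m$'' is not available by this route; the paper notes it can be rescued only via the expansion machinery of Alon--Krivelevich--Sudakov, which you do not invoke. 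The ``main obstacle'' you identify in Stage 2 (finding a popular colour when the lists are unrestricted) is likewise a genuine unresolved step, not a technicality.

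The paper's actual proof avoids both problems by analysing the greedy list-colouring algorithm directly. Fix an ordering $v_1,\dots,v_n$ and lists of size $\lfloor n/u\rfloor$ with $u=\frac{1}{4}\log_d n-\frac{1}{2}\log_d\log_d n-2$. For the event $A_{m+1}$ that $v_{m+1}$ sees all of its list colours on earlier neighbours, one bounds the blocking probability of a colour class of size $c_i$ by $1-(1-p)^{2c_i}$ (the relevant Latin-square entries lie in the row and column of $v_{m+1}$), uses the AM--GM inequality to bound the expected number $Y$ of blocked colours by $\frac{n}{u}\bigl(1-(1-p)^{2u}\bigr)$, and then applies Talagrand's inequality --- with Lipschitz constant $2$, since each element of $[n]$ occurs exactly once in the row and once in the column indexed by $v_{m+1}$ --- to get $\Pr(A_{m+1})=\Pr(Y\geq\lfloor n/u\rfloor)=o(1/n)$; a union bound over $m$ finishes. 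The choice $u\approx\frac{1}{4}\log_d n$ is exactly what makes $\frac{n}{u}(1-p)^{4u}\to\infty$, which the concentration step needs. If you wish to keep an independent-set-extraction scheme you must replace the union bound by an expansion-based argument; otherwise, switch to the greedy-plus-concentration route.
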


\begin{corollary}[Chromatic number; upper bound]\label{chromatic-upper}
Let $0 < p < 1$ be a fixed constant and let $d = 1/(1-p)$. Then, for almost every $G \in \mathscr{G}(L_n,p)$, we have
\[ \chi(G) \leq \frac{n}{\frac{1}{4}\log_d{n} - \frac{1}{2}\log_d{\log_d{n}} - 2}. \qed \]
\end{corollary}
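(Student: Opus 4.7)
The corollary follows immediately from Theorem \ref{list-chromatic-upper}, because for every graph $G$ one has $\chi(G) \leq \chi_l(G)$: a proper colouring is exactly a list-colouring when every vertex is given the same list of $\chi_l(G)$ colours. So, assuming the theorem (which is stated in the excerpt), the corollary is a one-line deduction, which is presumably why the authors close it with \qed.

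All the substance therefore resides in proving the list-chromatic upper bound. My plan would be to mimic the strategy used in the $\mathscr{G}(n,p)$ setting: establish, \whp, a uniform lower bound on the independence number of every sufficiently large induced subgraph, and then feed this into a greedy colouring argument that works list-by-list. Concretely, I would first prove the following large-deviation statement: \whp, every vertex subset $U \subseteq [n]$ with $|U| \geq m$ (for some $m = n/\mathrm{polylog}(n)$) satisfies $\alpha(G[U]) \geq s$, where $s := \frac{1}{4}\log_d n - \frac{1}{2}\log_d\log_d n - 2$. This is the \emph{uniform} version of the independence-number bound that underlies Theorem \ref{independence-upper}, now quantified over all large subsets rather than just $U = [n]$.

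Given this property, one runs the standard sequential procedure: at each step, pick a colour $c$ that appears in the lists of at least $|U|/\chi_l$ vertices, restrict to those vertices, extract from them an independent set of size $s$ using the uniform property, colour these $s$ vertices with $c$, and remove them; this reduces $|U|$ by $s$. Iterate until fewer than $m$ vertices remain, then finish the residual set with an extra $m$ colours. The total count is at most $(n-m)/s + m$, which for the stated values of $m$ and $s$ yields the bound in the theorem. The delicate point is to ensure that at every stage one can simultaneously find the common-colour subset \emph{and} an independent $s$-set inside it; this is precisely why the independence lemma must hold uniformly over all large subsets, and why invoking Theorem \ref{independence-upper} as a black box would not suffice.

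The main obstacle will be proving the uniform large-deviation statement in the Latin-square setting. In $\mathscr{G}(n,p)$ the probability that a fixed $s$-set is independent factors over edges, but here the events $\{L_{ij} \in S\}$ share entries across rows and columns and so are far from independent. The correct tool is likely an upper-tail concentration inequality such as Janson's inequality, or a Chernoff-type estimate with limited dependence, applied to the number of Latin entries inside $U \times U$ that hit $S$; the Latin-square property that each symbol appears exactly once per row and column should keep the dependency graph sparse enough for such tools to bite. One then takes a union bound over all $U$ of size $\geq m$, i.e.\ over at most $2^n$ subsets, and the exponents are balanced precisely to defeat this union bound — which is what pins down the constant $1/4$ and the $\log\log_d n$ correction in the statement.
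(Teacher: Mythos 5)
Your deduction of the corollary itself is correct and is exactly what the paper does: \autoref{chromatic-upper} is the one-line consequence $\chi(G)\leq\chi_l(G)$ of \autoref{list-chromatic-upper}, which is why the paper closes it with \qed. If the task were only to justify the corollary given the theorem, you would be done.

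However, you explicitly stake the substance of your argument on a plan for proving \autoref{list-chromatic-upper}, and that plan is precisely the one the paper discusses and rejects at the start of \sectionref{colouring}. The uniform statement you want --- every $U$ with $|U|\geq m$ contains an independent set of size $s$, proved by showing each fixed $U$ fails with probability $O(\exp\{-n^{1+\delta}\})$ and union-bounding over $2^n$ subsets --- is unattainable in this model for an information-theoretic reason, not because of any fixable dependency issue. The only randomness is the set $S\subseteq[n]$, so the sample space has at most $2^n$ points and no non-empty event has probability smaller than $2^{-n}$ (indeed $\Pr(G\text{ is empty})=2^{-n}$ when $p=1/2$). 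Hence no concentration tool (Janson, Chernoff with limited dependence, or otherwise) can produce failure probabilities of order $e^{-n^{1+\delta}}$, and the union bound over exponentially many subsets cannot be defeated. The paper notes that one can rescue a version of this strategy via the expansion properties of $G$ (citing Alon--Krivelevich--Sudakov), but chooses instead to analyse the greedy list-colouring algorithm directly: for each vertex $v_{m+1}$ one bounds the number $Y$ of colours of $L(v_{m+1})$ already used on earlier neighbours, shows $\E Y$ is bounded away from $|L(v_{m+1})|$ by roughly $\frac{n}{u}(1-p)^{2u}$, and applies Talagrand's inequality to get $\Pr(Y\geq\lfloor n/u\rfloor)=o(1/n)$ --- a union bound over only the $n$ vertices, which is why failure probability $o(1/n)$ suffices. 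The constant $\frac14$ and the $\frac12\log_d\log_d n$ correction come from requiring $\frac{n}{u}(1-p)^{4u}\to\infty$ in that concentration step, not from balancing against a $2^n$-fold union bound as you suggest.
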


With similar methods we will show the following upper bound for the clique cover number.

\begin{theorem}[Clique cover number; upper bound]\label{cc-upper}
Let $0 < p < 1$ be a fixed constant and let $d = 1/p$. Then, for almost every $G \in \mathscr{G}(L_n,p)$, we have
\[ \theta(G) \leq \frac{n}{\frac{1}{2}\log_d{n} - \log{\log_d n} - 6}.\]
\end{theorem}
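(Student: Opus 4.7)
The proof follows the outline of \autoref{list-chromatic-upper}, but with the role of independent sets played by cliques and the non-edge condition ``$L_{ij},L_{ji}\notin S$'' replaced by the one-sided edge condition ``$L_{ij}\in S$ for all $i<j$''. Set
\[
k := \left\lfloor \tfrac{1}{2}\log_d n - \log\log_d n - 6 \right\rfloor
\]
and fix a threshold $u_0 := n/\log n$. First I would show that, \whp, every $U\subseteq[n]$ with $|U|\geq u_0$ induces a clique of size at least $k$; greedily extracting such cliques then partitions $V(G)$ into at most $n/k + u_0 = (1+o(1))\,n/k$ cliques, which is within the stated bound.

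To find the clique inside $U$, fix $U$ of size $u\geq u_0$ and let $X_U$ count the $k$-subsets $A\subseteq U$ such that $L_{ij}\in S$ for all $i<j$, $i,j\in A$; any such $A$ is a clique in $G$. Writing $T_A:=\{L_{ij}:i<j,\ i,j\in A\}$, we have $\Pr(A\text{ contributes})=p^{|T_A|}$. The Latin square property forces any collision $L_{ij}=L_{i'j'}$ between two distinct upper-triangular cells to use at least three distinct indices from $\{i,j,i',j'\}$, since $i=i'$ would force $j=j'$ and vice versa. Counting colliding pairs within $U\times U$ by this constraint, the expected number of value-coincidences in $T_A$ for a uniform random $k$-subset $A\subseteq U$ is $O\bigl(u^3(k/u)^4+u^2(k/u)^3\bigr)=O(k^4/u)=o(1)$ with our choices of $u_0$ and $k$. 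Hence almost every such $A$ satisfies $|T_A|=\binom{k}{2}$, and consequently
\[
\E X_U \;\geq\; (1-o(1))\,\tbinom{u}{k}\,p^{\binom{k}{2}},
\]
which is a fast-growing super-polynomial function of $n$ by the choice of $k$.

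A Chebyshev (second moment) argument would then bound $\Pr(X_U=0)$ by organising the sum $\sum_{A,A'}\Pr(X_A=X_{A'}=1)$ according to $|T_A\cap T_{A'}|$. The covariance is nonzero only when $T_A\cap T_{A'}\neq\emptyset$, and each shared value forces at least one additional constrained index by the three-index rule above, which controls the number of contributing pairs $(A,A')$. The resulting tail bound will be strong enough to survive a union bound over all $U\subseteq[n]$ with $|U|\geq u_0$. The main obstacle is precisely this second-moment estimate in worst-case Latin squares: when $L_n$ restricts to (something close to) a Latin subsquare on $U$, as occurs for the multiplication table of a subgroup, many upper-triangular cells share values and the events $\{X_A=1\}$ become strongly positively correlated. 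Handling these correlations uniformly over all Latin squares $L_n$ is the crux of the argument, and is the Latin-square analogue---with $p$ in place of $1-p$ and ``$L_{ij}\in S$'' in place of ``$L_{ij}\notin S$''---of the corresponding step in the proof of \autoref{list-chromatic-upper}.
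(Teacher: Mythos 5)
Your approach is genuinely different from the paper's, and it has a gap at exactly the point you flag as ``the crux''. The paper does \emph{not} prove that every large induced subgraph contains a large clique; it colours $\bar G$ greedily in a fixed vertex order and uses Talagrand's inequality to show that, for each vertex, the probability of having an earlier $\bar G$-neighbour in every one of the $\lfloor n/u\rfloor$ colour classes is $o(1/n)$. The only Latin-square input it needs is that the entries $L_{v,w}$ in a single row $v$ are distinct, so the events $\{L_{v,w}\in S\}$ for the $c_i$ earlier vertices $w$ of colour $i$ are genuinely independent, giving $\Pr(B_i)\leq 1-p^{c_i}$; this completely avoids controlling correlations across a whole $U\times U$ block. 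Indeed, the paper explicitly discusses (at the start of \sectionref{colouring}) why the ``every large subset contains a large independent set/clique'' route fails in this model: the whole graph is determined by only $n$ independent bits, so per-subset failure probabilities cannot be made as small as in $\mathscr{G}(n,p)$.

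The concrete gap is your union bound. You must show $\Pr(X_U=0)$ is small enough to beat the number of sets $U$ of size $u_0=n/\log n$, which is $\binom{n}{n/\log n}=\exp\bigl(\Theta(n\log\log n/\log n)\bigr)$. Chebyshev gives only $\Pr(X_U=0)\leq \operatorname{Var}(X_U)/(\E X_U)^2$, and for clique counts at $k\sim\frac12\log_d n$ the variance ratio is dominated by pairs of $k$-sets sharing one value of $L$, giving a bound of order $k^4/u_0^2$ --- polynomially small in $n$, hopelessly far from $\exp\bigl(-n\log\log n/\log n\bigr)$. (This is the same reason Bollob\'as needed a martingale rather than a second moment to determine $\chi(\mathscr{G}(n,1/2))$.) Upgrading to an exponential bound, uniformly over all Latin squares, is precisely the step you defer, so the proof is incomplete at its central point; and even granting it, your bookkeeping $n/k+u_0$ is not $(1+o(1))n/k$, since $u_0=n/\log n=\Theta(n/k)$, so the leftover vertices cost a constant factor that the $-\log\log_d n-6$ slack in the theorem's denominator cannot absorb. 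I would recommend redoing the proof along the lines of the paper's proof of \autoref{list-chromatic-upper}, replacing $(1-p)^{c_i}$ (the chance that $v_{m+1}$ sends no $G$-edge to colour class $i$, certified via the row entries $L_{v_{m+1},w}\notin S$) by $p^{c_i}$ (the chance that $v_{m+1}$ sends a $G$-edge to every vertex of colour class $i$, certified via $L_{v_{m+1},w}\in S$ for all such $w$).
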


From \autoref{chromatic-upper} and \autoref{cc-upper} we deduce corresponding lower bounds on the independence and clique numbers.

\begin{corollary}[Independence number; lower bound]\label{independence-lower}
Let $0 < p < 1$ be a fixed constant and let $d = 1/(1-p)$. Then, for almost every $G \in \mathscr{G}(L_n,p)$, we have
\[ \alpha(G) \geq \frac{1}{2}\log_d{n} - \log{\log_d n} - 6.  \qed \]
\end{corollary}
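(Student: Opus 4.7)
The naive deduction $\alpha(G) \geq n/\chi(G)$ combined with \autoref{chromatic-upper} only delivers the weaker bound $\alpha(G) \geq \tfrac14\log_d n - \tfrac12\log_d\log_d n - 2$, which is short of the stated bound by a factor of two in the leading term. This gap is an artefact of the fact that \autoref{chromatic-upper} is routed through the list chromatic estimate \autoref{list-chromatic-upper}, which inflates the denominator. To close the gap I would prove the sharper ordinary chromatic bound
\[
\chi(G) \leq \frac{n}{\tfrac12\log_d n - \log\log_d n - 6}, \qquad d = \tfrac{1}{1-p},
\]
for almost every $G \in \mathscr{G}(L_n,p)$, and then apply $\alpha(G) \geq n/\chi(G)$, since some colour class in any proper $\chi(G)$-colouring has size at least $n/\chi(G)$.

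The plan for this chromatic upper bound is to mirror the proof of \autoref{cc-upper}, with the roles of $S$ and $T := [n]\setminus S$ interchanged. The essential observation is that $T$ has exactly the distribution of the generator of $\mathscr{G}(L_n, 1-p)$: each element of $[n]$ lies in $T$ independently with probability $1-p$. Moreover, any vertex set $I$ all of whose off-diagonal Latin-square entries $L_{ij}$ $(i\neq j)$ lie in $T$ is automatically independent in $G$, because for every pair $\{i,j\}\subseteq I$ both $L_{ij}$ and $L_{ji}$ then lie outside $S$, so $ij$ is not an edge of $G$. The proof of \autoref{cc-upper} partitions $[n]$ into cliques of size at least $\tfrac12\log_{1/p} n - \log\log_{1/p} n - 6$ by greedy extraction, using only (i) the Latin-square property that the entries in each row and each column are distinct and (ii) the i.i.d.\ Bernoulli$(p)$ distribution of $S$. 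Replacing $S$ by $T$ and "clique" by "set of vertices all of whose off-diagonal row entries lie in $T$" throughout that argument produces a partition of $[n]$ into independent sets each of size at least $\tfrac12\log_{1/(1-p)} n - \log\log_{1/(1-p)} n - 6$, which is the required proper colouring.

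The main obstacle is verifying that the probabilistic machinery in the proof of \autoref{cc-upper} survives the swap $(S,p)\leftrightarrow(T,1-p)$ intact. The combinatorial Latin-square inputs (row/column distinctness, sizes of the transversals corresponding to individual symbols, etc.) are insensitive to this swap, and the distributional inputs transfer verbatim because $S$ and $T$ have structurally identical product Bernoulli distributions after $p\mapsto 1-p$. With these two points checked, the leading coefficients $\tfrac12$, $1$ and $6$ in the denominator are preserved and only the base of the logarithm changes from $1/p$ to $1/(1-p)$, delivering the displayed bound on $\chi(G)$ and hence the corollary.
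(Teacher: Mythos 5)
You are right that the stated bound does not follow from \autoref{chromatic-upper} via $\alpha(G)\geq n/\chi(G)$, but your proposed repair does not work, and the obstruction is exactly the asymmetry the paper warns about when it notes that $\bar{\mathscr{G}}(L_n,p)$ is not the same model as $\mathscr{G}(L_n,1-p)$. The swap $(S,p)\leftrightarrow(T,1-p)$ does not carry the proof of \autoref{cc-upper} over to a proof of $\chi(G)\leq n/(\tfrac12\log_d n-\log\log_d n-6)$. In that proof the greedy step succeeds for colour $i$ when the new vertex $v$ is adjacent \emph{in $G$} to all $c_i$ earlier vertices of that colour; since adjacency to $w$ only requires $L_{vw}\in S$ \emph{or} $L_{wv}\in S$, success can be certified using the $c_i$ distinct row entries $L_{vw}$ alone, whence $\Pr(B_i)\leq 1-p^{c_i}$. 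In your dual version the greedy step succeeds when $v$ is \emph{non-adjacent} to all $c_i$ earlier vertices of that colour, and non-adjacency to $w$ is the conjunction $L_{vw}\notin S$ \emph{and} $L_{wv}\notin S$. You must therefore control up to $2c_i$ distinct symbols, since row and column entries need not coincide, and the best available estimate is $\Pr(B_i)\leq 1-(1-p)^{2c_i}$. That factor of $2$ in the exponent is precisely what produces the $\tfrac14\log_d n$ rather than $\tfrac12\log_d n$ in \autoref{list-chromatic-upper}; your ``sharper'' chromatic bound collapses back to that theorem and the gap in the leading constant remains. Your intermediate claim that a set all of whose off-diagonal \emph{row} entries lie in $T$ is independent is also false as written: independence requires the column entries to avoid $S$ as well.

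What you have actually uncovered is that the displayed formulas in \autoref{independence-lower} and \autoref{clique-lower} have been transposed. The intended deduction is immediate: a proper colouring with $n/u$ colours has a colour class of size at least $u$, and a clique cover by $n/u$ cliques contains a clique of size at least $u$. Applied to \autoref{chromatic-upper} this gives $\alpha(G)\geq\tfrac14\log_d n-\tfrac12\log_d\log_d n-2$ with $d=1/(1-p)$, and applied to \autoref{cc-upper} it gives $\omega(G)\geq\tfrac12\log_d n-\log\log_d n-6$ with $d=1/p$; those are the bounds the two corollaries should state (each with its correct base, as printed, but with the two functional forms exchanged).
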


\begin{corollary}[Clique number; lower bound]\label{clique-lower}
Let $0 < p < 1$ be a fixed constant and let $d = 1/p$. Then, for almost every $G \in \mathscr{G}(L_n,p)$, we have
\[ \omega(G) \geq \frac{1}{4}\log_d{n} - \frac{1}{2}\log_d{\log_d{n}} - 2. \qed\]
\end{corollary}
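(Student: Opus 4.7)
The plan is to derive the bound directly from \autoref{cc-upper} by a pigeonhole argument; no further model-dependent reasoning is needed. The key observation is that for every graph $G$ on $n$ vertices one has $\omega(G) \geq n/\theta(G)$: indeed, $V(G)$ admits a partition into $\theta(G)$ cliques whose sizes sum to $n$, so by the pigeonhole principle at least one of them has size at least $n/\theta(G)$.

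Applying this to a random Latin square graph and invoking \autoref{cc-upper} with $d = 1/p$, I get that for almost every $G \in \mathscr{G}(L_n,p)$,
\[
\omega(G) \;\geq\; \frac{n}{\theta(G)} \;\geq\; \tfrac{1}{2}\log_d n - \log\log_d n - 6.
\]
This is in fact slightly stronger than the bound claimed in the corollary. To complete the proof it suffices to check that, for $n$ sufficiently large (depending only on $p$),
\[
\tfrac{1}{2}\log_d n - \log\log_d n - 6 \;\geq\; \tfrac{1}{4}\log_d n - \tfrac{1}{2}\log_d\log_d n - 2,
\]
which reduces to the trivial claim that $\tfrac{1}{4}\log_d n - \tfrac{1}{2}\log_d\log_d n - 4 \to \infty$; this is immediate since $d > 1$ is fixed.

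There is no substantive obstacle here: the entire content lies in \autoref{cc-upper}, and the \qed appearing in the statement of the corollary signals that the deduction is purely formal. The only genuine choice is whether to record the (marginally) stronger bound $\tfrac{1}{2}\log_d n - \log\log_d n - 6$ that the pigeonhole argument actually yields; the authors evidently prefer the weaker expression, presumably to mirror the structural form of the list-chromatic upper bound \autoref{list-chromatic-upper}.
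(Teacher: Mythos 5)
Your proposal is correct and is exactly the deduction the paper intends: the \qed'd corollary is read off from \autoref{cc-upper} via the pigeonhole observation $\omega(G)\geq n/\theta(G)$. Your side remark is also well taken --- the argument actually yields the stronger bound $\tfrac{1}{2}\log_d n-\log\log_d n-6$ with $d=1/p$, and the weaker expression printed in the corollary appears simply to have been exchanged with the functional form appearing in the independence-number corollary.
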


\subsection{Expansion and related properties}\label{SS:2.2}

Alon and Roichman~\cite{Alon&Roichman94} proved that random Cayley graphs on logarithmic number of generators are expanders \whp. Our main result of this subsection states that a similar result holds in the case of random Latin square graphs.\vs

Before stating our result we need to introduce some notation. Given a multigraph $G$, its \bi{adjacency matrix} is the 0,1 matrix $A=A(G)$ with rows and columns indexed by the vertices of $G$, in which $A_{xy}$ is the number of edges in $G$ joining $x$ to $y$. If $G$ is $d$-regular then its \bi{normalised adjacency matrix} $T = T(G)$ is defined by $T = \frac{1}{d}A$. Note that $T$ is a real symmetric matrix, so it has an orthonormal basis of real eigenvectors. We will write $\lambda_0 \geq \lambda_1 \geq \dots \geq \lambda_{n-1}$ for the eigenvalues of $T$. It is easy to check that $\lambda_0 = 1$ and that $\lambda_{n-1} \geq -1$. We will write $\mu$ for the second largest eigenvalue in absolute value, i.e.~$\mu = \max{\{|\lambda_1|,|\lambda_{n-1}|\}}$.\vs

Finally, for $0 < x < 1$, we define
\[ H(x) = x\log{(2x)} + (1-x)\log{(2(1-x))}, \]
where we use the convention that all logarithms are natural.\vs

We can now state our main theorem.

\begin{theorem}[Second eigenvalue]\label{second eigenvalue}
Let $L$ be an $n \times n$ Latin square with entries in $[n]$ and let $G \in \mathscr{G}_m(L,k)$. Then, for every $0 < \varepsilon < 1$,
\[ \Pr(\mu(G) \geq \varepsilon) \leq 2n \exp{\Set{-k H\left(\frac{1 + \varepsilon}{2} \right)}} \leq 2n \exp{\Set{- \frac{k \varepsilon^2}{2} }}. \]
\end{theorem}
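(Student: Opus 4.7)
The plan is to run a matrix Chernoff argument (in the style of Ahlswede--Winter and Tropp) on the normalised adjacency matrix $T$. First I set up a random-matrix decomposition: for each $s\in[n]$ let $P_s$ be the $n\times n$ permutation matrix with $(P_s)_{ij}=1$ iff $L_{ij}=s$, and let $Y_i=\tfrac12(P_{s_i}+P_{s_i}^{T})$. Then $T=A/(2k)=\tfrac{1}{k}\sum_{i=1}^{k}Y_i$. Because each ordered pair $(i,j)$ has $L_{ij}=s$ for exactly one value of $s$, we have $\sum_{s=1}^{n}P_s=J$, so $\mathbb{E}[Y_i]=\tfrac{1}{n}J$, which is the orthogonal projection onto $\mathrm{span}(\mathbf{1})$. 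Since each $Y_i$ fixes $\mathbf{1}$ and has operator norm at most $1$, the centred matrices $X_i:=Y_i-\tfrac{1}{n}J$ are self-adjoint, satisfy $\mathbb{E}[X_i]=0$ and $-I\preceq X_i\preceq I$, and obey $\mu(G)=\bigl\|T-\tfrac{1}{n}J\bigr\|=\tfrac{1}{k}\bigl\|\sum_i X_i\bigr\|$.

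Next I bound the matrix moment generating function of each summand. The scalar inequality $e^{\theta x}\le\cosh(\theta)+x\sinh(\theta)$, valid for $x\in[-1,1]$ by convexity, promotes via the spectral calculus to the operator inequality $e^{\theta X_i}\preceq\cosh(\theta)I+\sinh(\theta)X_i$; taking expectations and using $\mathbb{E}[X_i]=0$ gives $\mathbb{E}\,e^{\theta X_i}\preceq\cosh(\theta)I$. By Lieb's subadditivity of the matrix cumulant generating function (in Tropp's formulation), with $S=\sum_i X_i$,
\[
\mathbb{E}\,\mathrm{tr}\,e^{\theta S}\;\le\;\mathrm{tr}\,\exp\!\Bigl(\sum_{i=1}^{k}\log\mathbb{E}\,e^{\theta X_i}\Bigr)\;\le\;\mathrm{tr}\,\exp\!\bigl(k\log\cosh(\theta)\cdot I\bigr)\;=\;n\cosh^{k}(\theta).
\]

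A standard matrix Markov step now gives $\Pr(\lambda_{\max}(S)\ge k\varepsilon)\le n\,e^{-k\theta\varepsilon}\cosh^{k}(\theta)$, and the optimal choice $\theta=\tfrac{1}{2}\log\tfrac{1+\varepsilon}{1-\varepsilon}$ collapses the exponent to exactly $-kH((1+\varepsilon)/2)$. Running the same argument with $-X_i$ in place of $X_i$ to bound $\lambda_{\min}(S)$ from below, and union-bounding, doubles the prefactor and produces the first claimed inequality. The second inequality then follows from $H((1+\varepsilon)/2)\ge\varepsilon^{2}/2$, which is immediate from $H(1/2)=H'(1/2)=0$ together with $H''(x)=1/(x(1-x))\ge 4$ on $(0,1)$.

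Once the setup is identified, the proof is essentially routine; the only non-trivial ingredients are the operator inequality $e^{\theta X}\preceq\cosh(\theta)I+\sinh(\theta)X$ for self-adjoint contractions and Lieb/Tropp matrix MGF subadditivity, both of which I would cite rather than reprove. The real content of the theorem is the structural observation that every $Y_i$ is a symmetric contraction with mean $\tfrac{1}{n}J$: this is what guarantees $-I\preceq X_i\preceq I$ and $\mathbb{E}[X_i]=0$, which in turn is exactly what produces the sharp $H$-type Chernoff exponent with no loss in constants.
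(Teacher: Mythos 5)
Your proof is correct, and the structural setup is identical to the paper's: the same matrices $P_s$ (the paper calls them $L(s)$), the same identification $\mu(G)=\|T-\tfrac1n J\|$ via the fact that $T-\tfrac1nJ$ kills the all-ones vector and agrees with $T$ on its orthogonal complement, and the same reduction to a tail bound for $\|\sum_i X_i\|$ where the $X_i$ are i.i.d.\ mean-zero self-adjoint contractions. The only real divergence is the concentration tool. The paper rescales the summands to lie in $[-\tfrac12 I,\tfrac12 I]$, views the partial sums as an operator-valued Doob martingale, and invokes the authors' own Operator Hoeffding Inequality from \cite{Christofides&Markstrom08} as a black box, which hands them the exponent $-kH\left(\frac{1+\varepsilon}{2}\right)$ directly. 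You instead rederive that tail bound from scratch via the Ahlswede--Winter/Tropp route: the chord bound $e^{\theta x}\le\cosh\theta+x\sinh\theta$ on $[-1,1]$ lifted by spectral calculus, Lieb/Tropp subadditivity of the matrix cumulant generating function, matrix Markov, and optimisation at $\tanh\theta=\varepsilon$ --- which indeed collapses to exactly $-kH\left(\frac{1+\varepsilon}{2}\right)$, as does your convexity argument for $H\left(\frac{1+\varepsilon}{2}\right)\ge\varepsilon^2/2$ via $H''\ge 4$. What your version buys is self-containedness modulo standard, widely cited matrix-Chernoff machinery rather than the authors' bespoke martingale inequality; what the paper's version buys is a one-line application and a statement (the martingale form) that is strictly more general than the independent-summand case needed here. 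The constants and the final bound are identical either way.
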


We remark that if $L$ is the difference table of a group, then the above theorem is similar to the result of Alon and Roichman mentioned in the beginning of this subsection. The only difference is that the bounds appearing in the above theorem, are the same as the bounds appearing in the authors' proof~\cite{Christofides&Markstrom08} of the Alon-Roichman theorem and are slightly better than the original bounds of the Alon-Roichman theorem.\vs

Recall that a graph $G$ is an $(n,d,\varepsilon)$-\bi{expander} if it is a graph on $n$ vertices with maximum degree $d$ such that for every subset $W$ of its vertices of size at most $n/2$ we have $|N(W) \setminus W| \geq \varepsilon |W|$, where $N(W)$ denotes the neighbourhood of $W$. Note that for this definition we may ignore any multiple edges or loops that $G$ may have. For more on expander graphs and their applications, we refer the reader to the recent survey of Hoory, Linial and Wigderson~\cite{Hoory&Linial&Wigderson06}.\vs

It is well known~\cite{Tanner84,Alon&Milman85} that a small second eigenvalue implies good expansion properties. The following corollary is an immediate consequence of \autoref{second eigenvalue} together with this fact.

\begin{corollary}[Expansion]\label{expansion}
For every $\delta> 0$, there is a $c(\delta)>0$ depending only on $\delta$, such that almost every $G \in \mathscr{G}_m(L_n,c(\delta) \log{n})$ is an $(n,2c(\delta)\log{n},\delta)$-expander.\qed
\end{corollary}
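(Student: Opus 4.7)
The plan is a direct combination of \autoref{second eigenvalue} with the well-known fact (already invoked in the excerpt via \cite{Tanner84,Alon&Milman85}) that a small normalised second eigenvalue forces good vertex expansion in a regular (multi)graph. Concretely, Tanner's inequality gives, for any $d$-regular multigraph with second eigenvalue in absolute value $\mu$ and any vertex subset $W$,
\[
|N(W)| \;\geq\; \frac{|W|}{\mu^{2}\bigl(1-|W|/n\bigr) + |W|/n}.
\]
Subtracting $|W|$ and noting that the resulting ratio is monotone decreasing in $|W|/n$, one obtains for every $W$ with $|W|\le n/2$ the bound
\[
\frac{|N(W)\setminus W|}{|W|} \;\geq\; \frac{1-\mu^{2}}{1+\mu^{2}}.
\]
Given $\delta\in(0,1)$, choose $\varepsilon=\varepsilon(\delta)>0$ small enough that $(1-\varepsilon^{2})/(1+\varepsilon^{2}) \geq \delta$, for instance $\varepsilon(\delta)=\sqrt{(1-\delta)/(1+\delta)}$; the key point is that $\varepsilon$ depends only on $\delta$.

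With this $\varepsilon$ in hand, apply \autoref{second eigenvalue} with $k=c\log n$ to a random $G\in\mathscr{G}_m(L_n,k)$ to get
\[
\Pr\bigl(\mu(G)\ge\varepsilon\bigr) \;\leq\; 2n\exp\!\bigl(-\tfrac{1}{2}\,c\,\varepsilon^{2}\log n\bigr) \;=\; 2\,n^{\,1-c\varepsilon^{2}/2},
\]
which tends to $0$ provided $c>2/\varepsilon^{2}$. Setting $c(\delta):=3/\varepsilon(\delta)^{2}$ therefore guarantees that whp $\mu(G)<\varepsilon(\delta)$, at which point Tanner's bound forces $G$ to be an $(n,\,2c(\delta)\log n,\,\delta)$-expander, the degree parameter matching the $2k$-regularity of $G$.

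Essentially no obstacle remains: the heavy probabilistic lifting has been done in \autoref{second eigenvalue}, and what is added here is a single union bound plus a deterministic spectral-to-combinatorial reduction. The only minor subtlety is that $G\in\mathscr{G}_m(L_n,k)$ may carry parallel edges and loops, but as the excerpt explicitly notes, the expander definition is insensitive to multiplicities, and Tanner's inequality is itself a statement about vertex neighbourhoods, so the conclusion transfers to the underlying simple graph without change.
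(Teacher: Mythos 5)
Your proposal is correct and is precisely the argument the paper intends: the corollary carries no separate proof, being declared an immediate consequence of \autoref{second eigenvalue} together with the spectral-gap-implies-expansion fact of \cite{Tanner84,Alon&Milman85}, which is exactly the Tanner-inequality reduction you carry out, with the same choice of $\varepsilon$ depending only on $\delta$ and $c(\delta)$ taken large enough to make the union bound vanish. Your tacit restriction to $\delta<1$ is the right reading of the statement, since no graph can have $|N(W)\setminus W|\geq \delta|W|$ for $|W|=n/2$ when $\delta>1$.
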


The fact that the second eigenvalue of the graph is small implies that such a graph has several properties that many `random-like' graphs possess. Informally, a graph of density $p$ is pseudorandom if its edge distribution resembles the edge distribution of $\mathscr{G}(n,p)$. The study of pseudorandom graphs was initiated by Thomason in~\cite{Thomason87a,Thomason87b}. Chung, Graham and Wilson~\cite{Chung&Graham&Wilson89} showed that many properties that a graph may possess, including the property of having small second eigenvalue, are in some sense equivalent to pseudorandomness.\vs

Here we list just a few of these consequences, mostly taken from the recent survey of Krivelevich and Sudakov~\cite{Krivelevich&Sudakov06}. We omit some of the proofs, but we note that some care needs to be taken since our graphs are multigraphs, while the result in the survey are stated only for simple graphs.\vs

To begin with, let us consider what value of $k$ guarantees that almost every $G \in \mathscr{G}(L_n,k)$ is connected. Let us first recall the corresponding results in $\mathscr{G}(n,p)$ and $\mathscr{G}_{n,r}$. It is well known that for any fixed $\delta> 0$, if $p \leq (1 - \delta) \log{n}/n$, then $\mathscr{G}(n,p)$ is \whp\ disconnected, while if $p \geq (1 + \delta) \log{n}/n$, then $\mathscr{G}(n,p)$ is \whp\ connected. On the other hand, $\mathscr{G}_{n,r}$ is \whp\ connected provided that $r \geq 3$.\vs

So what is the right threshold for the connectivity of random Latin square graphs? Once again this depends on the sequence $(L_n)$ of Latin squares chosen. For example, the Cayley graph of $\bb Z_q$ for $q$ prime, with respect to any set $S$ containing a non-trivial element is connected. On the other hand, the Cayley graph of $G = \bb Z_2^m$ with respect to any set of size less than $m = \log_2{|G|}$ is disconnected. Here, we prove that choosing slightly more elements are enough to guarantee \whp\ the connectedness not only of the random Cayley graph of $\mathbb{Z}_2^m$ but in fact the connectedness of any random Latin square graph. 

\begin{theorem}[Connectedness]\label{connectedness}
For any fixed $\delta> 0$, almost every $G \in \mathscr{G}(L_n,(1+\delta)\log_2{n})$ is \whp\ connected.
\end{theorem}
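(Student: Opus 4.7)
Plan: I would deduce the theorem directly from Theorem~\ref{second eigenvalue} via the standard spectral characterisation of connectivity. Since loops and multi-edges are irrelevant for connectivity, a sample $G\in\mathscr{G}(L_n,k)$ is connected if and only if the corresponding $2k$-regular multigraph $G_m\in\mathscr{G}_m(L_n,k)$ is. The eigenvalues of the normalised adjacency matrix $T(G_m)$ lie in $[-1,1]$ with $\lambda_0=1$, and a well-known fact from spectral graph theory is that $G_m$ is disconnected iff $\lambda_1=1$. Consequently $\{G\text{ disconnected}\}\subseteq\{\mu(G_m)\geq 1\}$, and it suffices to bound the latter.

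For every $0<\varepsilon<1$ we have $\{\mu\geq 1\}\subseteq\{\mu\geq\varepsilon\}$, so Theorem~\ref{second eigenvalue} yields
\[
\Pr(\mu(G_m)\geq 1)\;\leq\; 2n\exp\Set{-k\,H((1+\varepsilon)/2)}.
\]
The function $H(x)=x\log(2x)+(1-x)\log(2(1-x))$, with the convention $0\log 0=0$, is continuous and increasing on $[1/2,1]$ and satisfies $H(1)=\log 2$. Letting $\varepsilon\to 1^-$ the right-hand side decreases to $2n\cdot 2^{-k}$, giving
\[
\Pr(G\text{ disconnected})\;\leq\; 2n\cdot 2^{-k}.
\]
Substituting $k=(1+\delta)\log_2 n$ produces $2n\cdot n^{-(1+\delta)}=2n^{-\delta}=o(1)$, which is precisely the desired conclusion.

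The only technical point is the passage to the endpoint $\varepsilon=1$ in Theorem~\ref{second eigenvalue}, whose statement is formally restricted to $0<\varepsilon<1$; this is handled by monotone continuity of probability applied to the decreasing family $\{\mu\geq\varepsilon\}$ as $\varepsilon\uparrow 1$, combined with the continuity of $H$ at~$1$. The real conceptual point, though, is that the sharp $(1+\delta)\log_2 n$ threshold cannot be recovered by any naive union bound: using only the trivial estimate $|F_W|\leq|W|$ one arrives at $\sum_w \binom{n}{w}(w/n)^k$, which blows up for $|W|$ a constant fraction of $n$; it is the matrix-concentration content of Theorem~\ref{second eigenvalue} that handles all subsets uniformly. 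Finally, the threshold is essentially tight, as already witnessed by the Cayley sum graph on $\bb Z_2^m$, whose only possible components of size $n/2$ come from the $O(n)$ proper index-$2$ subgroups.
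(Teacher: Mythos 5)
Your proposal is correct and takes essentially the same route as the paper: both deduce the result from Theorem~\ref{second eigenvalue} together with the fact that $\mu(G)<1$ (equivalently $\lambda_1<1$) forces connectivity, the paper by fixing an $\varepsilon=2x-1<1$ with $H(x)\geq(1-\delta/2)\log 2$ and you by passing to the endpoint $\varepsilon\uparrow 1$ where $H\to\log 2$. Your endpoint version is legitimate (indeed $\Pr(\mu\geq 1)\leq\inf_{\varepsilon<1}\Pr(\mu\geq\varepsilon)$, so no continuity-of-measure argument is even needed) and yields the marginally sharper bound $2n^{-\delta}$ in place of the paper's $2n^{-\delta/4}$.
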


\begin{proof}
It is enough to prove the result for $0 < \delta< 1/2$. Note that $H(x)$ is continuous in $(0,1)$ and tends to $\log{2}$ as $x$ tends to 1. Pick an $x$ such that $H(x) \geq (1 - \delta/2)\log{2}$. Then, for $k = |S| = (1 + \delta)\log_2{n}$, we have $kH(x) \geq (1 + \delta/4)\log{n}$. Thus,
\[ \Pr(\mu(G(L,S)) \geq 2x - 1) \leq 2n \exp{\left\{-(1 + \delta/4)\log{n}\right\}} = 2n^{-\delta/4} = o(1).\]
Thus \whp, $\mu(G(L,S)) < 2x - 1 < 1$. It is well known that if $\mu(G) < 1$ then $G$ is connected, so the result follows.
\end{proof}

Let us now move to the vertex connectivity of random Latin square graphs. Recall that the \bi{vertex connectivity} $\kappa(G)$ of a graph $G$ is the minimal number of vertices that we need to remove in order to disconnect $G$. Clearly the vertex connectivity of any graph is at most its minimum degree $\delta(G)$. It is well known that for $G \in \mathscr{G}(n,p)$ we have $\kappa(G) = \delta(G)$. Recently, it was shown in~\cite{Krivelevich&Sudakov&Vu&Wormald01,Cooper&Frieze&Reed02} that the same holds for random $r$-regular graphs provided $3 \leq r \leq n-4$. In our case, the Cayley graphs on $\mathbb{Z}_2^m$ show that no such result can hold if the generating set $S$ has size less than $\log_2{n}$. Can we expect that such a result holds if the size of $S$ is large enough? As the following example shows the answer is no. To understand the idea of the example, observe that if we can find two neighbouring vertices $x,y$ in a $d$-regular graph $G$ with exactly $d-1$ common neighbours, then removing these neighbours disconnects $x$ and $y$ from the other vertices of $G$ and thus $\kappa(G) \leqslant d-1$. While in a random $d$-regular graph this is very unlikely to happen, in the specific example that follows we can (deterministically) guarantee that the random Latin square graph is regular and furthermore its vertex set can be partitioned into pairs so that vertices in the same pair have exactly the same neighbours outside of this pair. It thus only remains to check that \whp\ at least two vertices which belong to the same pair will be adjacent.\vs

\begin{eg}
Define a Latin square $L$ on $\{0,1,\ldots,r-1\} \times \{0,1\}$ with entries in $\{0,1,\ldots,2r-1\}$ as follows:
\begin{itemize}
\item[] $L_{(x,0),(y,0)} = \begin{cases}
        x + y & \text{ if } x \leq y \\
        x + y + r & \text{ if } x > y
        \end{cases}$
\item[] $L_{(x,0),(y,1)} = \begin{cases}
        x + y + r& \text{ if } x \leq y \\
        x + y & \text{ if } x > y
        \end{cases}$
\item[] $L_{(x,1),(y,0)} = \begin{cases}
        x + y + r& \text{ if } x \leq y \\
        x + y & \text{ if } x > y
        \end{cases}$
\item[] $L_{(x,1),(y,1)} = \begin{cases}
        x + y & \text{ if } x \leq y \\
        x + y + r& \text{ if } x > y
        \end{cases}$
\end{itemize}
Here, addition is done modulo $2r$. It can be easily checked that $L$ is indeed a Latin square. Pick any $S \subseteq \{0,1,\ldots,2r-1\}$ and let $G = G(L,S)$. Note that $G$ is $d$-regular for some $d$. Note also that for any $x \in \{0,1,\ldots,r-1\}$, we have that $N_G((x,0)) \setminus \{(x,1)\} = N_G((x,1)) \setminus \{(x,0)\}$, where $N_G$ denotes the neigbourhood of a vertex in $G$. But then, if $(x,0)$ is adjacent to $(x,1)$ for some $x$, and $G$ is not complete, we have that $\kappa(G) \leq d-1$. Indeed, $N_G((x,0)) \setminus \{(x,1)\}$ is a disconnecting set of size $d-1$. Now $(x,0)$ is adjacent to $(x,1)$ if and only if $2x + r \in S$. Let $p = p(r) \in (0,1)$ be chosen such that $pr \to \infty$ and $(1-p)r \to \infty$ as $r \to \infty$ and choose $S$ by picking its elements independently at random with probability $p$. Then \whp\ $G$ is not complete and there is an $x$ such that $(x,0)$ is adjacent to $(x,1)$  and so $\kappa(G) \leq \delta(G)-1$.
\end{eg}

The above example shows that even if the size of $S$ is large enough the vertex connectivity of a random Latin square qraph can be \whp\ strictly smaller than its minimum degree. However, our next theorem shows that if $S$ is large enough then the vertex connectivity of a random Latin square graph is \whp\ at most one less than its minimum degree.

\begin{theorem}[Vertex connectivity]\label{vertex-connectivity}
There is an absolute constant $C \leq 168$ such that whenever
$C\log{n} \leq k \leq n/4$, then $\delta(G) - 1 \leq \kappa(G) \leq
\delta(G)$ for almost every $G \in \mathscr{G}(L_n,k)$.
\end{theorem}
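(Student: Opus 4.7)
The plan is as follows. The upper bound $\kappa(G) \le \delta(G)$ is immediate: removing the open neighbourhood of any minimum-degree vertex leaves that vertex isolated. For the matching lower bound, I would argue by contradiction. Suppose $T$ is a vertex cut of $G$ with $|T| \le \delta(G) - 2$, let $C$ be a smallest component of $G - T$, and set $B = V(G) \setminus (C \cup T)$. A standard degree count -- each $v \in C$ has $\deg(v) \ge \delta(G)$ yet all its neighbours lie in $C \cup T \setminus \{v\}$ -- gives $|C| \ge \delta(G) - |T| + 1 \ge 3$ and $|C| \le (n - |T|)/2$, while by construction no $G$-edges run between $C$ and $B$. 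I would rule this out by splitting into cases according to the size of $|C|$.

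For the large-$|C|$ regime I would use \autoref{second eigenvalue} applied to the $2k$-regular multigraph $G_m \in \mathscr{G}_m(L_n, k)$ whose underlying simple graph is $G$. For $k \ge C \log n$ with $C$ a sufficiently large absolute constant, the theorem gives $\mu(G_m) \le \varepsilon$ \whp\ for some small constant $\varepsilon$. Since the absence of $G$-edges between $C$ and $B$ is equivalent to the absence of $G_m$-edges between $C$ and $B$, the standard expander mixing lemma applied to $G_m$ forces $|C||B| \le \varepsilon^2 n^2$. Combined with $|B| \ge (n - |T|)/2 \ge n/4$ (using $|T| \le 2k \le n/2$, where the hypothesis $k \le n/4$ is used), this forces $|C| \le 4\varepsilon^2 n$, confining $|C|$ to a small constant fraction of $n$.

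For small $|C|$ the argument is combinatorial. For any two vertices $u, v \in C$, both $N_G(u) \setminus C$ and $N_G(v) \setminus C$ are contained in $T$, so inclusion-exclusion gives
\[
|N_G(u) \cap N_G(v)| \ge 2(\delta(G) - |C| + 1) - |T| \ge \delta(G) - 2|C| + 4,
\]
which is large when $|C|$ is small. To rule this out I would use a first-moment computation: for a fixed pair $\{u,v\}$ and a fixed candidate $w$, the probability that both edges $uw$ and $vw$ are present in $G$ is of order $(2k/n)^2$, so the expected number of pairs with at least $m$ common neighbours is at most $\binom{n}{2}\binom{n-2}{m}(O(k^2/n^2))^m$, which is $o(1)$ for an appropriate choice of $m$. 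For the intermediate range of $|C|$, where neither the spectral bound nor a pairwise common-neighbourhood bound alone suffices, I would repeat the first-moment argument using $r$-tuples inside $C$: for $r$ vertices $v_1, \ldots, v_r \in C$, the common neighbourhood outside $C$ has size at least $\delta(G) - r|C| + O(r)$, while the expected number of $r$-tuples with many common neighbours shrinks rapidly as $r$ grows.

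The main obstacle is closing the gap between the two regimes cleanly. The best $\varepsilon$ achievable from \autoref{second eigenvalue} at $k = C \log n$ is of order $1/\sqrt{C}$, so the spectral bound only rules out $|C|$ up to a $\Theta(1/C)$-fraction of $n$, whereas a single pairwise first-moment bound only rules out $|C| = O(\log n)$. Carefully combining the $r$-tuple first-moment bounds with the spectral bound so that the various thresholds overlap, and choosing $C$ large enough to make all the corresponding union bounds $o(1)$, is what pins down the explicit value $C \le 168$ in the statement.
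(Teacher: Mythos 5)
Your overall architecture (trivial upper bound; contradiction via a cut $T$, smallest component, and a split into a spectral regime for large components and a combinatorial regime for small ones) matches the paper's in outline, but two of your three steps have genuine problems, one of them fatal.

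The fatal one is the small-$|C|$ argument. You estimate the probability that a fixed $w$ is a common neighbour of $u$ and $v$ as $O(k^2/n^2)$ and then multiply over $m$ choices of $w$ as if these events were independent. In a Latin square graph neither claim holds: a single element $s\in S$ can simultaneously create the edges $uw$ and $vw$ (when, say, $L_{uw}=L_{vw}=s$), making the joint probability $\Theta(k/n)$ rather than $\Theta(k^2/n^2)$, and the events for different $w$ can be perfectly correlated. The paper's own Example (the Latin square on $\{0,\dots,r-1\}\times\{0,1\}$) exhibits $n/2$ pairs of vertices with \emph{identical} neighbourhoods outside the pair for \emph{every} choice of $S$; for that sequence of Latin squares the expected number of pairs with at least $\delta(G)-2$ common neighbours is $\Omega(n)$, not $o(1)$. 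More structurally, any argument that rules out all pairs with many common neighbours would prove $\kappa(G)=\delta(G)$, which that Example shows is false; this is precisely why the theorem only claims $\kappa(G)\geq\delta(G)-1$. The paper avoids this trap by only ruling out components of size at least $3$: it shows, via a martingale/Azuma bound on the $k$ independent choices $s_1,\dots,s_k$ (Lipschitz constant $4$), that for every triple $x,y,z$ one has $|(N(x)\cup N(y))\setminus N(z)|>128\log n$ \whp, which forces $|U|\leq 2$ once $|U|<128\log n$ is known. Your argument has no mechanism that stops at pairs, so it cannot be repaired without changing the combinatorial target.

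The second problem is the gap between regimes, which you acknowledge but do not close; your proposed $r$-tuple first moments inherit the same independence fallacy. The paper bridges the gap in one clean step: after the mixing lemma gives $|U|\leq 16n\log n/k$ from $e(U,W)=0$, it double-counts the $2k|U|$ multi-edge endpoints at $U$ to get $e(U,T)=2k|U|-e(U,U)>\tfrac{3}{2}k|U|$, while a second application of the mixing lemma gives $e(U,T)\leq\bigl(\tfrac{2|T|}{n}+4\sqrt{|T|\log n/(k|U|)}\bigr)k|U|\leq\tfrac{3}{2}k|U|$ whenever $|U|\geq 128\log n$ (using $|T|\leq 2k$ and $k\leq n/4$). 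This contradiction pushes $|U|$ all the way down to $O(\log n)$ without any intermediate regime, after which the triple argument above finishes the proof.
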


The example of $\mathbb{Z}_2^m$ shows that we cannot take $C$ to be
equal to 1. It would be interesting to know whether every $C$
strictly larger than 1 works or not. It seems that our proof cannot
bring the value of $C$ down to $1+ \delta$ for any $\delta>0$, so we have
not tried to optimize the value of $C$ that our proof gives.\vs

It should be noted that above result is not a direct consequence of the expansion properties of random Latin square graphs. From \autoref{second eigenvalue}, we can only deduce that $\mu = O(\sqrt{\log{n}/k})$. However one can construct examples of $d$-regular graphs on $n$ vertices, with $d = \Omega(\log{n})$, $\mu = \Omega(\sqrt{\log{n}/d})$ but $\kappa(G) \leq d -  \Omega(\log{n})$. We refer the reader to the discussion following~\cite[Theorem 4.1]{Krivelevich&Sudakov06} for more details about how one can construct such a graph.\vs

Similar to the vertex connectivity, the \bi{edge connectivity} $\lambda(G)$ of a graph $G$ is the minimal number of edges that we need to remove in order to disconnect $G$. It is easy to show that $\kappa(G) \leq \lambda(G) \leq \delta(G)$. Hence, \autoref{vertex-connectivity} applies with $\kappa(G)$ replaced by $\lambda(G)$. In fact, our next theorem shows that we can do a bit more. If $|S| \geq (1 + \delta)\log_2{n}$ then $\whp$ the edge connectivity is equal to the minimum degree of $G$. In view of random Cayley graphs on $\mathbb{Z}_2^m$, this is in fact best possible.

\begin{theorem}[Edge connectivity]\label{edge-connectivity}
For any $\delta> 0$, if $L$ is an $n \times n$ Latin square with entries in $[n]$ and $S$ is a set of $(1+\delta)\log_2{n}$ elements of $[n]$, chosen independently and uniformly at random, then \whp, $\lambda(G(L,S)) = \delta(G(L,S))$.
\end{theorem}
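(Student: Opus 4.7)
The plan is to establish $\lambda(G) \geq \delta(G)$ whp, the reverse inequality being trivial. Equivalently, I must show that, whp, every partition $V(G) = A \cup \bar A$ with $1 \leq |A| \leq n/2$ satisfies $e(A, \bar A) \geq \delta(G)$. The singleton case $|A| = 1$ is automatic since $e(\{v\}, V \setminus \{v\}) = d(v) \geq \delta(G)$, and for $2 \leq a := |A| \leq \delta(G)$ a deterministic count suffices:
\[ e(A, \bar A) = \sum_{v \in A} d(v) - 2 e(A) \geq a \delta(G) - a(a-1) \geq \delta(G), \]
the last step being $(a-1)(\delta(G) - a) \geq 0$.

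It remains to rule out cuts with $|A| > \delta(G)$, which I would do via the spectral bound. A birthday-type estimate shows that the multiset $S$ has no duplicates whp (since $\binom{k}{2}/n = o(1)$); in that event $|\{j : L_{vj} \in S\}| = k$ for every $v$, so $\delta(G) \geq k$, and it is enough to control cuts with $|A| \geq k+1$. Picking $\epsilon_0 < 1$ so that $H\bigl((1+\epsilon_0)/2\bigr) > \log 2/(1 + \delta/4)$ -- possible since $H(x) \to \log 2$ as $x \to 1$ -- \autoref{second eigenvalue} gives $\mu(G_m) < \epsilon_0 =: 1 - \eta$ whp, where $G_m \in \mathscr{G}_m(L_n, k)$ is the associated $2k$-regular multigraph and $\eta = \eta(\delta) > 0$ is a constant. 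A standard Cheeger-type argument (project $\chi_A$ onto $\mathbf{1}^\perp$ and use that the second normalised eigenvalue of $G_m$ is at most $\mu$) then yields
\[ e_m(A, \bar A) \geq 2k(1 - \mu) \cdot \frac{|A||\bar A|}{n} \geq k \eta |A| \qquad (|A| \leq n/2). \]
Because $S$ is a set whp, each multiplicity $A_m[i,j]$ lies in $\{0,1,2\}$, so $e_m(A, \bar A) \leq 2\, e(A, \bar A)$, and hence
\[ e(A, \bar A) \geq \tfrac{k \eta}{2} |A| \geq \tfrac{k \eta (k+1)}{2} = \Omega(k^2) \]
for $|A| \geq k + 1$, comfortably exceeding $2k \geq \delta(G)$ for $n$ large enough.

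The main obstacle I foresee is bridging the multigraph Cheeger bound (which controls $e_m$) and the simple-graph quantity $e$ that actually appears in $\lambda(G)$. The observation that rescues the argument is that the absence of duplicates in $S$ (whp) forces multiplicities into $\{0,1,2\}$, so the passage from $e_m$ to $e$ loses only a factor of two; because Cheeger produces $\Omega(k \cdot |A|) = \Omega(k^2)$ edges in $G_m$ as soon as $|A| \geq k+1$, this loss is easily absorbed and the simple-graph edge boundary still dominates the threshold $\delta(G) \leq 2k$.
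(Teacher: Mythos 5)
Your proof is correct and is essentially the argument the paper intends: the paper omits the proof of this theorem, deferring to~\cite[Theorem 4.3]{Krivelevich&Sudakov06}, whose strategy --- dispose of cuts with $|A|\leq\delta(G)$ by the deterministic count $e(A,\bar{A})\geq a\delta(G)-a(a-1)\geq\delta(G)$ and of larger cuts via the spectral gap from \autoref{second eigenvalue} --- is exactly what you carry out, with the correct extra care in passing between the $2k$-regular multigraph and the underlying simple graph (no duplicates in $S$ \whp, hence multiplicities in $\{0,1,2\}$ and $e_m\leq 2e$). The only nit is that a loop (when $L_{vv}\in S$) can force $\delta(G)=k-1$ rather than $k$, so ``it suffices to treat $|A|\geq k+1$'' is off by one as written; this is harmless, since your bound $e(A,\bar{A})\geq\tfrac{\eta k}{2}|A|$ already exceeds $\delta(G)\leq 2k$ for every $|A|\geq 4/\eta$, so the two regimes overlap comfortably.
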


Another graph property which follows from pseudorandomness is that of Hamiltonicity. Again, this property depends on the structure of the Latin square. For example, the Cayley graph of $\mathbb{Z}_q$ for $q$ prime, with respect to any non-trivial element is Hamiltonian. On the other hand, as it was mentioned earlier, the Cayley graph of $G = \bb Z_2^m$ with respect to any set of size less than $m = \log_2{|G|}$ is not even connected. A very appealing conjecture attributed to Lov\'asz, states that every connected Cayley graph is Hamiltonian. Together with \autoref{connectedness}, this would imply for example that every random Cayley graph on $(1 + o(1))\log_2{n}$ generators is Hamiltonian. Pak (see~\cite{Pak&Radoicic09}) conjectured that there is a constant $c \geqslant 1$ such that every random Cayley graph on $(c + o(1))\log_2{n}$ generators is Hamiltonian. However, even this consequence is still not known. Recently, Krivelevich and Sudakov~\cite{Krivelevich&Sudakov03} proved that every $d$-regular graph on $n$ vertices satisfying
\[ \mu \leq \frac{(\log{\log{n}})^2}{1000\log{n}(\log{\log{\log{n}}})}, \]
is Hamiltonian, provided $n$ is large enough. Using this, together with the proof technique of the Alon-Roichman theorem, they proved that a random Cayley graph on $O((\log{n})^5)$ generators is $\whp$ Hamiltonian. Here, we extend this result to random Latin square graphs as well. Moreover, using \autoref{second eigenvalue} directly, we can in fact replace $(\log{n})^5$ by $(\log{n})^3$.

\begin{theorem}[Hamiltonicity]\label{Hamiltonicity}
If $k = \omega\left(\frac{(\log{n})^3 (\log{\log{\log{n}}})^2}{(\log{\log{n}})^2}\right)$, then almost every $G \in \mathscr{G}(L_n,k)$ is Hamiltonian.\footnote{Recall that $f(n) = \omega(g(n))$ means that $f(n)/g(n) \to \infty$ as $n \to \infty$.}
\end{theorem}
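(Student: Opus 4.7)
The plan is to combine \autoref{second eigenvalue} with the Krivelevich--Sudakov criterion recalled above: every $d$-regular graph on $n$ vertices with $\mu \leq \frac{(\log\log n)^2}{1000\log n\,(\log\log\log n)}$ is Hamiltonian for all sufficiently large $n$. The whole task is thus to verify this spectral hypothesis for a typical random Latin square graph.

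First I would work in the multigraph model $\mathscr{G}_m(L_n,k)$, whose elements are $2k$-regular. Setting
\[\varepsilon := \frac{(\log\log n)^2}{1000\log n\,(\log\log\log n)},\]
\autoref{second eigenvalue} yields $\Pr(\mu(G_m)\geq\varepsilon) \leq 2n\exp(-k\varepsilon^2/2)$. A direct substitution of the hypothesis $k = \omega\!\left(\frac{(\log n)^3(\log\log\log n)^2}{(\log\log n)^2}\right)$ shows that the exponent is $\omega\!\left(\log n\cdot(\log\log n)^2\right)$, comfortably $\gg\log n$, so the failure probability is $o(1)$ and \whp\ $\mu(G_m) < \varepsilon$.

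Next I would pass from $G_m$ to the underlying simple graph $G \in \mathscr{G}(L_n,k)$. Any Hamilton cycle uses only $n$ edges between $n$ distinct pairs of vertices, so loops and parallel edges play no role: a Hamilton cycle in $G_m$ descends automatically to one in $G$. It therefore suffices to prove that $G_m$ itself is Hamiltonian.

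The main obstacle is that Krivelevich and Sudakov state their theorem for simple $d$-regular graphs, whereas $G_m$ is a multigraph (and the underlying simple $G$ is only almost regular, with $\Delta(G)\leq 2\delta(G)$ as noted in the Introduction). I would deal with this either by checking that the Krivelevich--Sudakov argument, which is based on rotation--extension together with a P\'osa-type expansion estimate, uses only regularity and the spectral gap and hence transfers verbatim to the $2k$-regular multigraph $G_m$; or, alternatively, by working directly with $G$ after observing that the nonnegative matrix recording discarded loops and parallel edges has operator norm negligible compared with the spectral gap produced by \autoref{second eigenvalue}, and that the resulting regularity defect is correspondingly small. Either route reduces to essentially bookkeeping on top of the spectral estimate, and feeds the nearly-regular graph with small $\mu$ into the Krivelevich--Sudakov criterion to conclude.
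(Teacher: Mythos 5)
Your proposal is correct and follows essentially the same route as the paper's own (sketch) proof: verify the Krivelevich--Sudakov spectral criterion for the $2k$-regular multigraph via \autoref{second eigenvalue}, note that the multigraph version of their theorem needs checking, and descend from a Hamilton cycle in the multigraph to one in the underlying simple graph. Your explicit computation that the exponent $k\varepsilon^2/2$ is $\omega\bigl(\log n\,(\log\log n)^2\bigr)$ is exactly the calculation the paper leaves implicit.
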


\section{Cliques and independent sets}\label{cliques}

We begin by finding upper bounds for the clique number of random Latin square graphs. Naturally, one would like to find a good upper bound for the expected number of $d$-cliques of a random Latin square graph, and from this deduce a corresponding upper bound for the clique number. Given $A \subseteq [n]$ let $A' = \{L_{ij}: i,j \in A, i \neq j\}$. If $|A| = d$, then $|A'|$ can be as large as $\binom{d}{2}$ and as small as $d-1$. In the former case, the probability that $A$ forms a clique in $\mathscr{G}(L_n,p)$ is $(2p-p^2)^{\binom{d}{2}}$. However, in the latter case, this probability is at least $p^{d-1}$. So, unless one is able to bound the number of $A \subseteq [n]$ for which $|A'|$ is relatively small, then this approach cannot give any good bounds. Our approach will be to show that any $A \subseteq [n]$ of size $d$, has a subset $B$ of size $\Omega(\sqrt{d})$, such that $|B'|$ is relatively large, i.e. $\Omega(|B|^2)$. By standard arguments it will then follow that \whp\, (if $d$ is large enough,) no such $B$ forms a clique, and hence no $A \subseteq [n]$ of size $d$ forms a clique. Before stating our main lemma, we need to introduce some more notation.
\begin{itemize}
\item[] $n_2(A) = |\{ \{i,j\}: i,j \in A \text{ distinct and } L_{ij} = L_{ji} \}|$;
\item[] $ n_3(A) = |\{(i,j,k): i,j,k \in A \text{ distinct and } L_{ij} = L_{jk} \}|$;
\item[] $n_4(A) = |\{ \{(i,j),(k,l)\}: i,j,k,l \in A \text{ distinct and } L_{ij} = L_{kl} \}|$.
\end{itemize}
If $x \in A'$ appears exactly $r_x$ times as $L_{ij}$ for distinct $i,j \in A$, then, with the above notation, we have
\[ n_2(A) + n_3(A) + n_4(A) = \sum_{x \in A'}\binom{r_x}{2}.\]

We are now ready to state and prove our main lemma.

\begin{lemma}\label{MainLemma}
Let $A$ be a set of elements of $X$ of size $a$. Then for every $b \leq a$, $A$ contains a subset $B$ of size $b$ such that
\[ |B'| \geq b(b-1) \left(1 - \frac{b-2}{a-2} - \frac{(b-2)(b-3)}{2(a-3)} \right) - n_2(B). \]
\end{lemma}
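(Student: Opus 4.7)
The approach is to use the probabilistic method: sample $B$ uniformly at random from the $\binom{a}{b}$ subsets of $A$ of size $b$, establish a deterministic lower bound of the form $|B'| \geq b(b-1) - n_2(B) - n_3(B) - n_4(B)$ valid for every such $B$, and then show that the expectation of the right hand side is already at least the quantity claimed in the lemma. Since the random variable $|B'| + n_2(B)$ therefore has expectation at least $b(b-1)\bigl(1 - \frac{b-2}{a-2} - \frac{(b-2)(b-3)}{2(a-3)}\bigr)$, some specific $B$ must realize this bound, and that is the subset the lemma asks for.

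For the deterministic step, for each $x \in B'$ let $r_x(B)$ denote the number of ordered pairs $(i,j) \in B \times B$ with $i \neq j$ and $L_{ij} = x$. Then $\sum_x r_x(B) = b(b-1)$ and $|B'| = \sum_x \mathbf{1}[r_x(B) \geq 1]$, so
\[ b(b-1) - |B'| \;=\; \sum_{x \in B'} (r_x(B) - 1) \;\leq\; \sum_{x \in B'} \binom{r_x(B)}{2}, \]
using the elementary inequality $r - 1 \leq \binom{r}{2}$ for every integer $r \geq 1$. The identity $\sum_x \binom{r_x(B)}{2} = n_2(B) + n_3(B) + n_4(B)$ stated in the paragraph preceding the lemma then gives
\[ |B'| \;\geq\; b(b-1) - n_2(B) - n_3(B) - n_4(B). \]

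For the probabilistic step, an ordered triple $(i,j,k)$ contributing to $n_3(A)$ contributes to $n_3(B)$ precisely when $\{i,j,k\} \subseteq B$, which occurs with probability $\binom{a-3}{b-3}/\binom{a}{b} = \frac{b(b-1)(b-2)}{a(a-1)(a-2)}$, and similarly an unordered pair of ordered pairs counted by $n_4(A)$ survives in $B$ with probability $\frac{b(b-1)(b-2)(b-3)}{a(a-1)(a-2)(a-3)}$. The key structural observation, which I expect to be the main point of the argument, is to use the Latin square property to bound $n_3(A)$ and $n_4(A)$: for each fixed $j \in A$, the map $i \mapsto k$ determined by $L_{jk} = L_{ij}$ is a bijection of $[n]$ (since both rows and columns of $L$ are permutations), so for each $j$ there are at most $a-1$ choices of $i \in A \setminus \{j\}$ producing a valid triple, giving $n_3(A) \leq a(a-1)$. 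An analogous argument, counting ordered quadruples $(i,j,k,l)$ and dividing by $2$, yields $n_4(A) \leq \tfrac{1}{2}a(a-1)(a-2)$.

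Substituting these upper bounds gives
\[ \E[n_3(B)] \leq \frac{b(b-1)(b-2)}{a-2}, \qquad \E[n_4(B)] \leq \frac{b(b-1)(b-2)(b-3)}{2(a-3)}, \]
so, taking expectations in the deterministic inequality above,
\[ \E[\,|B'| + n_2(B)\,] \;\geq\; b(b-1) \left(1 - \frac{b-2}{a-2} - \frac{(b-2)(b-3)}{2(a-3)}\right), \]
and the lemma follows by selecting any $B$ attaining at least this expected value. The only genuinely nontrivial step is the structural bound on $n_3(A)$ and $n_4(A)$; the rest is bookkeeping with binomial coefficients.
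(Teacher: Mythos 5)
Your proposal is correct and follows essentially the same route as the paper: the identity $b(b-1)-|B'|=\sum_{x\in B'}(r_x-1)\leq\sum_{x\in B'}\binom{r_x}{2}=n_2(B)+n_3(B)+n_4(B)$, a uniformly random $b$-subset $B\subseteq A$ with the same expectation formulas for $n_3(B)$ and $n_4(B)$, and the same Latin-square counting bounds $n_3(A)\leq a(a-1)$ and $n_4(A)\leq\tfrac{1}{2}a(a-1)(a-2)$, followed by averaging. No gaps.
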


\begin{proof}
For any $B \subseteq A$ of size $b$, we have
\begin{align*}
|B'| &= b(b-1) - \sum_{x \in B'}(r_x - 1) \\
& \geq b(b-1) - \sum_{x \in B'}\binom{r_x}{2} \\
&= b(b-1) - n_2(B) - n_3(B) - n_4(B).
\end{align*}
Picking $B$ at random from all $b$ element subsets of $A$, we have
\begin{itemize}
\item[] $\E (n_3(B)) = n_3(A) \frac{b(b-1)(b-2)}{a(a-1)(a-2)}$; and
\item[] $\E (n_4(B)) = n_4(A) \frac{b(b-1)(b-2)(b-3)}{a(a-1)(a-2)(a-3)}$.
\end{itemize}
Fixing distinct $i,j \in A$, there is exactly one $k \in [n]$ such that $L_{ij} = L_{jk}$, hence $n_3(A) \leq a(a-1)$. Similarly, fixing distinct $i,j,k \in A$, there is exactly one $L \in [n]$ such that $L_{ij} = L_{kl}$, hence $n_4(A) \leq  \frac{a(a-1)(a-2)}{2}$. It follows that
\[ \E (|B'| + n_2(B)) \geq b(b-1)\left(1 - \frac{b-2}{a-2} - \frac{(b-2)(b-3)}{2(a-3)} \right), \]
and hence there is a choice of $B$ satisfying the requirements of the lemma.
\end{proof}\vspace{1pt}

We can now prove \autoref{clique-upper}.\vspace{1pt}

\begin{proof}[Proof of \autoref{clique-upper}]
Let $d = 1/(2p - p^2)$, let $b = 3 \log_d{n}$ and let $a = 3b^2$. Pick any $A \subseteq [n]$ of size $a$. By \lemmaref{MainLemma}, there is a $B \subseteq A$ of size $b$, such that
\[|B'| \geq \frac{5}{6}b^2 - n_2(B) + O(b).\]
Pick $|B'|$ pairs $(i,j)$ in $B \times B$, with $i \neq j$, such that all $L_{ij}$ are distinct. Suppose that for exactly $k$ of the pairs we have $L_{ij} = L_{ji}$. It follows that there are at least
\[ (|B'| - k) - \left(\binom{b}{2} - n_2(B) \right) = \frac{1}{3}b^2 - k + O(b),\]
sets $\{i,j\}$, such that both $(i,j)$ and $(j,i)$ have been chosen (and so $L_{ij} \neq L_{ji}$). Therefore, the probability that $B$ is a clique is at most
\[ p^k \left( 2p - p^2 \right)^{\frac{1}{3}b^2 - k + O(b)} \leq \left( 2p - p^2 \right)^{\frac{1}{3}b^2 + O(b)}. \]
So the expected number of cliques $B \subseteq [n]$ of size $b$ with $|B'| \geq \frac{5}{6}b^2 - n_2(B) + O(b)$ is at most
\[ \binom{n}{b} \left( 2p - p^2 \right)^{\frac{1}{3}b^2 + O(b)} \leq  \frac{1}{b!} \left( n \left( 2p - p^2 \right)^{\frac{1}{3}b + O(1)} \right)^b = o(1). \]
Thus, by Markov's Inequality, we deduce that \whp, no such $B$ exists. By \autoref{MainLemma}, it now follows that \whp, there is no clique of size $3b^2$, as required.
\end{proof}\vspace{1pt}

In a similar way, we can prove the upper bound for the independence number.\vspace{1pt}

\begin{proof}[Proof of \autoref{independence-upper}]
Let $b = 3\log_{1/(1-p)}{n}$, and let $a = 3b^2$. Pick any $A \subseteq [n]$ of size $a$. By \lemmaref{MainLemma}, there is a $B \subseteq A$ of size $b$, such that
\[|B'| \geq \frac{5}{6}b^2 - n_2(B) + O(b) \geq \frac{1}{3}b^2 + O(b).\]
Therefore, the probability that $B$ is an independent set, is at most $(1-p)^{b^2/3 + O(b)}$, and so the expected number of independent sets is
\[ \binom{n}{b}(1-p)^{b^2/3 + O(b)} \leq  \frac{1}{b!} \left( n (1-p)^{b/3 + O(1)}\right)^b = o(1). \]
Thus, by Markov's Inequality, we deduce that \whp, no such $B$ exists. By \autoref{MainLemma} it now follows that \whp, there is no independent set of size $3b^2$, as required.
\end{proof}\vspace{1pt}

\section{Colouring}\label{colouring}

We now move to the proof of the upper bounds on the chromatic number. Before presenting our proof, let us see why a standard approach from the theory of random graphs does not seem to generalise in a straightforward manner.\vs

Suppose we could show that \whp, every induced subgraph of $G \in \mathscr{G}(L_n,1/2)$ on $n_1 = n/(\log{n})^2$ vertices has an independent set of size at least $s_1 = (2 - \varepsilon) \log_{2}{n}$. It then follows immediately that \whp, the chromatic number is at most $n/s_1 + n_1 \sim n/2 (\log_2{n})$. To do this, one usually shows that the probability that a given induced subgraph on $n_1$ vertices does not contain an independent set of size $s_1$ is $O(\exp{ \left\{ -n^{1 + \delta} \right\} })$, for some $\delta> 0$. However in our model, this is far from being true. In fact, the probability that $G \in \mathscr{G}(L_n,1/2)$ is empty is $2^{-n}$, which is much larger than $O(\exp{ \left\{ -n^{1 + \delta} \right\} })$. It turns out that this problem can be rectified by using the expansion properties of the graph $G$. We refer the reader to~\cite{Alon&Krivelevich&Sudakov99} to see how one can do this. Here, we will use a different approach from which we can obtain a better constant in the bound.\vs

Another approach for finding an upper bound for the chromatic number, is to analyse the greedy algorithm. This is the approach that we are going to use. This approach will in fact give an upper bound on the list-chromatic number as well. However, we need to modify the standard argument, because of the dependencies in the appearance of edges. In our modification we will make use of \hyperlink{Talagrand}{Talagrand's Inequality}~\cite{Talagrand95}. We will use the following version taken (essentially) from~\cite{Janson&Luczak&Rucinski00}.

\begin{Talagrand}\hypertarget{Talagrand}
Let $X$ be a non-negative integer valued random variable, not identically 0, which is determined by $n$ independent random variables and let $M$ be a median of $X$. Suppose also that there exist $K$ and $r$ such that
\begin{enumerate}
\item $X$ is $K$-Lipschitz. I.e.~changing the outcome of one of the variables, changes the value of $X$ by at most $K$.
\item For any $s$, if $X \geq s$, then there is a set of at most $rs$ of the variables, whose outcome certifies that $X \geq s$.
\end{enumerate}
Then
\[ \Pr(|X - M| \geq t) \leq \begin{cases}
4\exp{ \left\{ -\frac{t^2}{8 r K^2 M} \right\} } & \text{ if } 0 \leq t \leq M;\\
2\exp{ \left\{ -\frac{t}{8 r K^2} \right\} } & \text{ if } t > M.
\end{cases} \]
\end{Talagrand}

In particular, it follows that,
\begin{align*}
|\E X - M| &\leq \E|X-M| = \int_{0}^{\infty} \Pr(|X-M| > t)\; dt \\
           &\leq 4 \int_{0}^{M} \exp{ \left\{ -\frac{t^2}{8 r K^2 M} \right\} }\; dt + 2 \int_{M}^{\infty} \exp{ \left\{ -\frac{t}{8 r K^2 } \right\} }\; dt \\
           &\leq 2K \sqrt{8 \pi r M} + 16rK^2.
\end{align*}

Since also $M = 2M \Pr(X \geq M) \leq 2\E X$, we deduce that for $0 \leq t \leq \E X$
\[ \Pr \left( |X - \E X| \geq t + 16rK^2 + 16K \sqrt{r \E X} \right) \leq 4\exp{ \left\{ -\frac{t^2}{16 r K^2 \E X} \right\} }.\]

This is the form of \hyperlink{Talagrand}{Talagrand's Inequality} that we will be using.\vs

Let us now proceed to the proof of \autoref{list-chromatic-upper}.\vspace{1pt}

\begin{proof}[Proof of \autoref{list-chromatic-upper}]
Let $d = 1/(1-p)$ and let $u = \frac{1}{4}\log_d{n} - \frac{1}{2}\log_d{\log_d n} - 2$. Suppose every vertex $v$ has a list $L(v)$ of size $\lfloor n/u \rfloor$. Fix an ordering $v_1,\ldots, v_n$ of the vertices. Suppose we are given a (not necessarily proper) colouring $c$ of vertices $v_1,\ldots,v_m$, such that $c(v_i) \in L(v_i)$ for each $1 \leq i \leq m$. Suppose $L(v_{m+1}) = \{x_1,\ldots,x_{\lfloor n/u \rfloor}\}$, let $c_i = c_i(m)$ be the number of times that colour $x_i$ is used on vertices $v_1,\ldots,v_m$ and let $A_{m+1}$ be the event that $v_{m+1}$ has an earlier neighbour in every colour of the list $L(v_{m+1})$. We claim that $\Pr(A_{m+1}) = o(1/n)$. Having proved this, we proceed by list-colouring the graph greedily. The probability that this fails is at most $\sum_{m=1}^n\Pr(A_m) = o(1)$, so by Markov, we have \whp\ $\chi_l(G) \leq n/u$.\vs

To prove our claim, let $B_i = B_i(m)$ be the event that $v_{m+1}$ is joined with an earlier vertex of colour $x_i$. Then clearly $\Pr(B_i) \leq 1 - (1-p)^{2c_i}$. Let $Y$ be the number of colours in $L(v_{m+1})$ appearing on earlier neighbours of $v_{m+1}$. Then
\[ \E Y \leq \frac{n}{u} - \sum (1-p)^{2c_i} \leq \frac{n}{u} - \frac{n}{u} (1-p)^{2mu/n} \leq \frac{n}{u}\left(1 - (1-p)^{2u}\right), \]
where the second inequality follows from the Arithmetic-Geometric Mean Inequality. Let $X = Y - \E Y + \frac{n}{u}(1-(1-p)^{2u})$ and let $t = c\frac{n}{u}(1-p)^{2u}$, for some $0 < c < 1$ to be determined later. Then $X$ satisfies the conditions of \hyperlink{Talagrand}{Talagrand's Inequality} with $K = 2$ and $r = 1$. Note that, for $n$ large enough, $0 \leq t \leq \E X$, so
\begin{multline*}
\Pr\left(|X - \E X| \geq c\frac{n}{u}(1-p)^{2u} + 64 + 32 \sqrt{\frac{n}{u}(1 - (1-p)^{2u})} \right) \leq \\
4 \exp{\Set{-\frac{c^2 n (1-p)^{4u}}{64u} } } = 4 \exp{\Set{ -\frac{c^2 n \left( \log_{d}{n} \right)^2}{64u(1-p)^4} } } \leq \\ 4 \exp{\Set{- \frac{c^2 \log{n}}{16(1-p)^8 \log{(1/(1-p))} } }}.
\end{multline*}

By elementary calculus, it is easy to show that $16x^8 \log{(1/x)} \leq 2/e$ whenever $0 < x < 1$. Hence, choosing any $c$ with $\sqrt{2/e} < c < 1$, we deduce that
\[ \Pr\left(|X - \E X| \geq c\frac{n}{u}(1-p)^{2u} + 64 + 32 \sqrt{\frac{n}{u}(1 - (1-p)^{2u})} \right) = o(1/n).\]
In particular, since $Y \leq X$,
\[ \Pr\left(Y \geq \frac{n}{u} - c\frac{n}{u}(1-p)^{2u} + 64 + 32 \sqrt{\frac{n}{u}} \right) = o(1/n). \]
Since
\[ \frac{n}{u}(1-p)^{4u} = \frac{(\log_d{n})^2}{u(1-p)^8} \to \infty,\]
we deduce that (for $n$ large enough,)
\[ \Pr(A_{m+1}) = \Pr(Y \geq \lfloor n/u \rfloor) = o(1/n). \qedhere\]
\end{proof}\vspace{1pt}

Similarly, we can give an upper bound to the clique cover number.\vspace{1pt}

\begin{proof}[Proof of \autoref{cc-upper}]
Let $d = 1/p$ and let $u = \frac{1}{2}\log_d{n} - \log_d{\log_d n} - 6$. Fix an ordering of the vertices. Suppose we are given a not necessarily proper colouring of the first $m$ vertices of $\bar{G}$, using colours 1 up to  $\lfloor n/u \rfloor$. Let $c_i = c_i(m)$ be the number of times colour $i$ is used and let $A_{m+1}$ be the event that the $(m+1)$-th vertex has a neighbour in every colour. We claim that $\Pr(A_{m+1}) = o(1/n)$. Having proved this, we colour the graph greedily. The probability that we need more than $\lfloor n/u \rfloor$ colours is at most $\sum_{m=1}^n\Pr(A_m) = o(1)$, so by Markov, we have \whp\ $\theta(G) = \chi(\bar{G}) \leq n/u$.\vs

To prove our claim, let $B_i = B_i(m)$ be the event that the $(m+1)$-th vertex is joined (in $\bar{G}$,) with an earlier vertex of colour $i$. Then clearly $\Pr(B_i) \leq 1 - p^{c_i}$. Let $Y$ be the number of colours appearing on earlier neighbours of the $(m+1)$-th vertex. Then
\[ \E Y \leq \frac{n}{u} - \sum p^{c_i} \leq \frac{n}{u} - \frac{n}{u}p^{mu/n} \leq \frac{n}{u}\left(1 - p^{u}\right). \]
Let $X = Y - \E Y + \frac{n}{u}(1-p^{u})$ and let $t = c\frac{n}{u}p^{u}$, for some $0 < c < 1$ to be determined later. Then $X$ satisfies the conditions of \hyperlink{Talagrand}{Talagrand's Inequality} with $K = 2$ and $r = 1$. Note that, for $n$ large enough, $0 \leq t \leq \E X$, so
\begin{multline*}
\Pr\left(|X - \E X| \geq c\frac{n}{u}p^{u} + 64 + 32 \sqrt{\frac{n}{u}(1 - p^{u})} \right) \leq \\
4 \exp{\Set{-\frac{c^2 n p^{2u}}{64u}}} \leq 4 \exp{\Set{-\frac{ c^2 \log{n}}{32p^{12} \log{(1/p)} } }}.
\end{multline*}
But $32x^{12} \log{(1/x)} \leq 8/(3e) < 1$ whenever $0 < x < 1$.  Hence, choosing any $c$ with $\sqrt{8/3e} < c < 1$, we deduce that
\[ \Pr\left(|X - \E X| \geq c\frac{n}{u}p^{u} + 64 + 32 \sqrt{\frac{n}{u}(1 - p^{u})} \right) = o(1/n). \]
In particular, since $Y \leq X$,
\[ \Pr\left(Y \geq \frac{n}{u} - c\frac{n}{u}p^{2u} + 64 + 32 \sqrt{\frac{n}{u}} \right) = o(1/n). \]
Since
\[ \frac{n}{u}p^{2u} = \frac{(\log_d{n})^2}{up^{12}} \to \infty,\]
we deduce that (for $n$ large enough,)
\[ \Pr(A_{m+1}) = \Pr(Y \geq \lfloor n/u \rfloor) = o(1/n). \qedhere\]
\end{proof}

\section{Expansion and consequences}\label{S:Expansion}

We now proceed to the expansion properties of random Latin square graphs and to the proof of \autoref{second eigenvalue} on the second eigenvalue of such graphs. In~\cite{Christofides&Markstrom08}, we generalized Hoeffding's inequality, to an inequality where the random variables do not necessarily take real values, but instead take their values in the set of (self-adjoint) operators of a (finite dimensional) Hilbert space. We then used this inequality to give a new proof of the Alon-Roichman theorem. The main tool in the proof of \autoref{second eigenvalue} will be this \hyperref[Operator-Hoeffding]{Operator Hoeffding Inequality}. Before stating the inequality, we need to introduce some more notation.\vs

Let $V$ be a Hilbert space of dimension $d$, let $A(V)$ be the set of self adjoint operators on $V$ and let $P(V)$ be the be the cone of positive operators on $V$, i.e.
\[ P(V) = \{A \in A(V): \text{all eigenvalues of $A$ are nonnegative} \}.\]
This defines a partial order on $A(V)$ by $A \leq B$ iff $B-A \in P(V)$. We denote by $[A,B]$ the set of all $C \in A(V)$ such that $A \leq C \leq B$. We also denote by $\|A\|$ the largest eigenvalue of $A$ in absolute value.\vs

We can now state our \hyperref[Operator-Hoeffding]{Operator Hoeffding Inequality}. We refer the reader to~\cite{Christofides&Markstrom08} for its proof.

\begin{theorem}[\cite{Christofides&Markstrom08} Operator Hoeffding Inequality]\label{Operator-Hoeffding}
Let $V$ be a Hilbert space of dimension $d$ and let $X_i = \E(X|\mathscr{F}_i)$ be a martingale, taking values in $A(V)$, whose difference sequence satisfies $Y_i \in [-\frac{1}{2}I, \frac{1}{2} I]$. Then
\[ \Pr(\|X - \E X \| \geq nh) \leq 2d \exp{ \{-nH(1/2 + h)\} }.\]
\end{theorem}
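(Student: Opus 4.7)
The plan is to lift the classical exponential-moment (Chernoff--Hoeffding) method to the operator setting. Since $\|A\| = \max(\lambda_{\max}(A), \lambda_{\max}(-A))$, applying the argument to both $X - \E X$ and $\E X - X$ and taking a union bound costs only a factor of $2$, so it suffices to prove
\[ \Pr(\lambda_{\max}(X - \E X) \geq nh) \leq d \exp\{-nH(1/2 + h)\}. \]
For any $\theta > 0$, the operator Markov inequality (via the standard estimate $\lambda_{\max}(Z) \leq \log\mathrm{tr}(e^Z)$) gives
\[ \Pr(\lambda_{\max}(X - \E X) \geq nh) \leq e^{-\theta nh}\, \E\, \mathrm{tr}\, \exp\!\Bigl(\theta \sum_{i=1}^n Y_i\Bigr), \]
so the core of the argument is to bound the expected trace exponential of the martingale sum.

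The main obstruction is that $e^{A+B} \neq e^A e^B$ when $A$ and $B$ do not commute, so one cannot simply factor the exponential over $i$. I would resolve this by peeling off the terms $Y_n, Y_{n-1}, \ldots, Y_1$ one at a time, each time conditioning on the preceding $\sigma$-algebra. The tool that makes this work is Lieb's concavity theorem, which states that $A \mapsto \mathrm{tr}\exp(H + \log A)$ is concave on the cone of positive operators; combined with conditional Jensen's inequality it yields
\[ \E\bigl[\mathrm{tr}\exp(\theta S_{i-1} + \theta Y_i) \mid \mathscr{F}_{i-1}\bigr] \leq \mathrm{tr}\exp\bigl(\theta S_{i-1} + \log \E[e^{\theta Y_i} \mid \mathscr{F}_{i-1}]\bigr), \]
where $S_{i-1} = \sum_{j < i} Y_j$. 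Iterating this $n$ times removes all randomness and leaves the deterministic expression $\mathrm{tr}\exp\!\bigl(\sum_i \log M_i\bigr)$, where $M_i = \E[e^{\theta Y_i} \mid \mathscr{F}_{i-1}]$.

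For each factor I need an operator-valued Hoeffding lemma. Pointwise convexity of $y \mapsto e^{\theta y}$ on $[-\tfrac{1}{2}, \tfrac{1}{2}]$ gives
\[ e^{\theta y} \leq \Bigl(\tfrac{1}{2} + y\Bigr) e^{\theta/2} + \Bigl(\tfrac{1}{2} - y\Bigr) e^{-\theta/2}, \]
and since each $Y_i$ is self-adjoint with spectrum in $[-\tfrac{1}{2}, \tfrac{1}{2}]$, the spectral theorem upgrades this to an operator inequality. Taking conditional expectations and using $\E[Y_i \mid \mathscr{F}_{i-1}] = 0$ gives $M_i \preceq \cosh(\theta/2)\, I$; by operator monotonicity of $\log$, $\log M_i \preceq \log \cosh(\theta/2)\, I$, so the trace exponential above is at most $d \cdot \cosh(\theta/2)^n$. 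Collecting everything,
\[ \Pr(\|X - \E X\| \geq nh) \leq 2d \exp\{-n(\theta h - \log \cosh(\theta/2))\}, \]
and optimising over $\theta$ (the maximiser being $\theta = \log\tfrac{1+2h}{1-2h}$) produces the Legendre rate $nH(1/2 + h)$.

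The hard part is the matrix-analytic step that converts the sum inside the exponential into the product of conditional matrix MGFs. Lieb's concavity theorem is a genuinely deep ingredient, and its combination with a non-trivial filtration (rather than genuine independence) is where one must be careful about what commutes with what. A cruder alternative avoiding Lieb would use the Golden--Thompson inequality $\mathrm{tr}(e^{A+B}) \leq \mathrm{tr}(e^A e^B)$ together with iterated conditioning; it delivers a bound of the same shape but is lossier in the operator case. Either route ultimately reduces the problem to the scalar Bernoulli Chernoff bound whose exact rate function is $H$.
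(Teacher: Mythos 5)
Your argument is correct, but note that this paper does not actually prove the statement: it is imported verbatim from the authors' earlier work \cite{Christofides&Markstrom08}, to which the reader is referred for the proof. So the relevant comparison is with that reference, whose argument is of the Ahlswede--Winter type: one runs the same exponential-moment/trace scheme, but the iterated conditioning step is handled with the Golden--Thompson inequality $\mathrm{tr}(e^{A+B}) \leq \mathrm{tr}(e^A e^B)$ together with $\mathrm{tr}(PQ) \leq \|Q\|\,\mathrm{tr}(P)$ for positive $P$, rather than with Lieb's concavity theorem as you do. In this particular problem the two routes give \emph{identical} bounds: your operator Hoeffding lemma shows that the conditional matrix moment generating function satisfies $M_i \preceq \cosh(\theta/2)\,I$, i.e.\ it is dominated by a scalar multiple of the identity, and this is precisely the situation in which Golden--Thompson loses nothing (the usual degradation, replacing $\|\sum_i \sigma_i^2\|$ by $\sum_i \|\sigma_i^2\|$, only shows up for non-scalar variance proxies). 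So your remark that the Golden--Thompson route is ``lossier in the operator case'' is true in general but not here. What your Lieb-based route buys is robustness and modernity --- it is the argument that generalises to sharper matrix martingale inequalities --- at the cost of invoking a genuinely deep theorem where the cited proof gets by with the more elementary Golden--Thompson inequality. All the individual steps you outline check out: the factor $2$ from the two-sided reduction, the factor $d = \mathrm{tr}(I)$ from the trace, the transfer of the scalar convexity bound to operators via the spectral theorem using $\E[Y_i \mid \mathscr{F}_{i-1}] = 0$, and the Legendre optimisation $\sup_{\theta}(\theta h - \log\cosh(\theta/2)) = H(1/2+h)$ at $\theta = \log\frac{1+2h}{1-2h}$.
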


Note that the case $d = 1$ of this inequality is exactly Hoeffding's inequality.\vs

We now proceed to show that random Latin square graphs have small second eigenvalue and thus good expansion properties.

\begin{proof}[Proof of \autoref{second eigenvalue}]
Let $s_1,\ldots,s_k$ be elements of $[n]$ chosen independently and uniformly at random. For $s \in [n]$ let $L(s)$ be the 0,1 matrix in which $L(s)_{ij}= 1$ if and only if $L_{ij} = s$.  So the normalised adjacency matrix of the multigraph $G$ generated by these elements is $T = \frac{1}{2k}\sum\limits_{i=1}^k{(L(s_i) + L(s_i)^T)}$. Let $B = T - \frac{1}{n}J$, where $J$ is the $n$ by $n$ matrix having `1' in every entry. We claim that $\mu(G) = \| B \|$. Indeed, if $\{v_0,v_1,\ldots,v_{n-1}\}$ is an orthonormal basis of $T$, with each $v_i$ having eigenvalue $\lambda_i$, and $v_0 = \frac{1}{\sqrt{n}}(1,\ldots,1)$, then $Bv_0 = 0$ and $Bv_i = \lambda_i v_i$, so $\mu(G) = \| B \|$ as required. Let $Y_i$ be the operator whose matrix is $\frac{1}{4}\left(L(s_i) + L(s_i)^T - \frac{2}{n}J \right)$. It is easy to check that $X_i = Y_1 + \ldots + Y_i$ is a martingale satisfying the conditions of the theorem. It follows that
\begin{align*}
 \Pr(\mu(G) \geq \varepsilon) &= \Pr\left(\Norm{\frac{1}{k}\sum_{i=1}^k{Y_i}} \geq \frac{\varepsilon}{2} \right) \\
                           &= \Pr \left(\Norm{X - \E X} \geq \frac{\varepsilon k}{2} \right) \\
                           &\leq 2 n \exp{\Set{-kH\left(\frac{1 + \varepsilon}{2}\right)}},
\end{align*}
as required.
\end{proof}

\begin{proof}[Proof of \autoref{vertex-connectivity}]
Let $T$ be a minimial disconnecting set, so $|T| \leq \delta(G) \leq 2k$. Let $U$ be the smallest component of $G \setminus T$ and let $W = V(G) \setminus {(U \cup T)}$. Recalling that $k \leq n/4$, we see that $|W| \geq n/4$. We claim that \whp, $|U| \leq 128 \log{n}$. So let us assume that $|U| > 128\log{n}$ and aim to obtain a contradiction.\vs

Our proof will be very similar to~\cite[Theorem 4.1]{Krivelevich&Sudakov06}. In particular, we will use the edge distribution bound for pseudorandom graphs (see e.g.~\cite[Theorem 2.11]{Krivelevich&Sudakov06}) which implies that for every subsets $A,B$ of the vertex set of $G$ we have
\begin{equation}\label{edge-distribution}
\left|e(A,B) - \frac{2k}{n}|A||B| \right| \leqslant 2k \mu(G) \sqrt{|A||B|}.
\end{equation}

Here $e(A,B)$ denotes the number of edges between $A$ and $B$ counted with multiplicity. Note in particular that this means that edges in $G[A \cap B]$ are counted twice.\vs

Firstly, we deduce from \autoref{second eigenvalue} that \whp\ 

\[
\mu(G) \leq 2\sqrt{\frac{\log{n}}{k}}.
\]

Since $e(U,W) = 0$, it follows from~\eqref{edge-distribution} that 

\[ 
|U| |W| < n\mu(G) \sqrt{|U||W|}
\]

and so
\[ 
|U| < \frac{\mu(G)^2 n^2}{|W|} \leq 4\mu(G)^2 n \leq \frac{16 n \log{n}}{k}.
\]

Using this together with~\eqref{edge-distribution} we get that

\[ 
e(U,U) \leq \frac{2k}{n}|U|^2 + 2k \mu(G) |U| < (32\log{n} + 4 \sqrt{k\log{n}})|U| \leq \frac{k}{2}|U|,
\]

where in the last inequality we used the assumption that $k \geqslant C\log{n}$, for some large enough $C$. It follows that 

\[ e(U,T) = 2k|U| - e(U,U) > \frac{3k}{2}|U|.\]

On the other hand, using~\eqref{edge-distribution} once more, we have

\[ 
e(U,T) \leq \frac{2k}{n}|U||T| + 2k\mu(G) \sqrt{|U||T|} \leq \left(\frac{2|T|}{n} +  4\sqrt{\frac{|T|\log{n}}{k|U|}} \right)k|U| \leq \frac{3k}{2}|U|, 
\]

where we have used the facts that $|T| \leqslant 2k$, $k \leqslant n/2$ and the assumption that $|U| \geqslant 128\log{n}$.\vs

But this is a contradiction as we have proved that $3k|U|/2 \leqslant e(U,T) < 3k|U|/2$. So we may assume that $|U| < 128 \log{n}$.\vs

We now claim that \whp, the following holds: For any 3 distinct vertices $x,y,z$ of $G$, $|(N(x) \cup N(y)) \setminus N(z)| > 128 \log{n}$, where $N(x)$ denotes the neighbourhood of the vertex $x$. Having proved this, it will follow that \whp\ $|U| \leq 2$ and so $|T| \geq \delta(G) - 1$. To see how this follows, observe that if $U$ contains three vertices, say $x,y,z$, then the total number of their neighbours is \whp\ bigger than $128\log{n} + |N(z)|$. In particular, there must be at least $128\log{n} + |N(z)| - |U| > |N(z)|$ vertices outside $U$ which have one of $x,y,z$ as their neighbour. But this implies that $|T| > |N(z)| \geqslant \delta(G)$, contradicting the minimality of $|T|$.\vs

So, let $x,y,z$ be distinct vertices of $G$. Let $s_1,s_2,\ldots,s_k$ be the elements of $S$ chosen uniformly at random and let $X_i = \E (|(N(x) \cup N(y)) \setminus N(z)| | s_1,\ldots,s_i)$. Then $X_0,X_1,\ldots,X_k$ is a martingale with Lipschitz constant 4 and $X_0 \geq k(1 - 2/n)^k$. It follows by the Hoffding-Azuma inequality that
\[ \Pr(|(N(x) \cup N(y)) \setminus N(z)| \leq k(1 - 2/n)^k - t) \leq \exp{\left\{ - \frac{t^2}{2k} \right\}}.\]

Now let $t = \sqrt{8k\log{n}}$ and observe that if $C$ and $n$ are large enough then $128\log{n} \leqslant k(1 - 2/n)^k - t$ and so

\[ 
\Pr(|(N(x) \cup N(y)) \setminus N(z)| \leq 128\log{n}) = O(n^{-4}).
\]
Our claim now follows from the union bound. This completes the proof of the theorem. (It can be checked that $C = 168$ works.)
\end{proof}

We omit the proof of \autoref{edge-connectivity}, as it can be proved using a similar argument as in~\cite[Theorem 4.3]{Krivelevich&Sudakov06}

\begin{proof}[Sketch proof of \autoref{Hamiltonicity}]
Firstly, one needs to check that the result of Krivelevich and Sudakov~\cite{Krivelevich&Sudakov03} mentioned before the statement of the theorem, also holds for $d$-regular multigraphs. We omit the details of this check. Then the result follows directly from \autoref{second eigenvalue}.
\end{proof}

\section{Conclusion and open problems}\label{Final}

We have introduced new models of random graphs arising from Latin squares and studied some of their properties. There is still a lot of research that needs to be done even for many of the properties that we have considered here.\vs

Regarding the clique and independence numbers it would be
interesting to know if the upper bound can be reduced further. In
particular, we believe (but cannot prove) that the 2 in the exponent
can be reduced further. It would be also interesting to know whether
there are examples of random Latin square graphs whose
clique/independence number is significantly larger than $\Theta(\log{n}
\log{\log{n}})$. It looks plausible that this is not the case.\vs

Similar remarks hold for the lower bound on the chromatic and clique cover numbers. Any improvement on the upper bound of the independence/clique numbers would give a corresponding improvement on the chromatic/clique cover numbers but it might be possible (or even easier) to get such improvements directly.\vs

Another interesting question which we have not been able to answer so far is the determination of the Hadwiger number of random Latin square graphs, i.e.~the largest integer $k$ such that the graph can be contracted into a $K_k$. We do not even know, for $p = 1/2$ say, whether this number depends on the sequence of Latin squares or not.\vs

We have not studied at all the girth of random Latin square graphs. The reason is that it depends a lot on the structure of the Latin squares chosen. For example, almost every $G \in \mathscr{G}(\mathbb{Z}_3^m,p)$ has \whp\ girth 3, provided $pn \to \infty$, where $n = 3^m$. On the other hand, we claim that almost every $G \in \mathscr{G}(\mathbb{Z}_2^m,p)$ has \whp\ girth strictly greater than 3 provided that $pn^{2/3} \to 0$, where $n = 2^m$. Indeed, the expected number of triangles containing a fixed vertex $x$ is $\binom{n-1}{2}p^3$ which tends to 0. By Markov's inequality $x$ is \whp\ not contained in any triangle. But since the graph is vertex transitive, our claim follows.\vs

The expansion properties of random Latin squares imply that almost every $G \in \mathscr{G}(L_n, c\log_2{n})$, with $c > 1$, has logarithmic diameter. An interesting question here is the threshold for the diameter becoming equal to 2. It turns out that there are constants $c_1$ and $c_2$ such that if $p < c_1 \sqrt{\log{n}/n}$, then almost every $G \in \mathscr{G}(L_n,p)$ has diameter greater than 2, while if $p > c_2 \sqrt{\log{n}/n}$, then almost every $G \in \mathscr{G}(L_n,p)$ has diameter less than or equal to 2. The values of $c_1$ and $c_2$ depend on the sequence of Latin squares chosen. Our results regarding the diameter will appear in a forthcoming paper~\cite{Christofides&Markstrom+}.

\section*{Acknowledgements}

This work started when the second author was visiting the Department of Pure Mathematics and Mathematical Statistics of the University of Cambridge and continued when the first author was visiting the Deparment of Mathematics and Mathematical Statistics of \Umea\ University. The authors would like to thank both departments for their hospitality.\vs

The first author would like to thank the Engineering and Physical Sciences Research Council and the Cambridge Commonwealth Trust which supported him while he was at the University of Cambridge and also the Deparment of Mathematics and Mathematical Statistics of \Umea\ University for the grant making his visit there possible. The second author would like to thank the Royal Physiographic Society in Lund for the grant making his visit to the Univesity of Cambridge possible.

\bigskip

\noindent
{\footnotesize
Demetres Christofides, Institute for Theoretical Computer Science, Faculty of Mathematics and Physics, \Malostranske\ \Namesti\ 25, 188 00 Prague, Czech Republic, \href{mailto:christofidesdemetres@gmail.com}{\tt christofidesdemetres@gmail.com}

\smallskip

\noindent Klas \Markstrom, Department of Mathematics and Mathematical Statistics, \Umea\ University, 90187 \Umea, Sweden, \href{mailto:klas.markstrom@math.umu.se}{\tt klas.markstrom@math.umu.se}
}

\end{document}